\numberwithin{equation}{section}%
\newtheorem{theorem}{Theorem}[section]
\newtheorem{proposition}{Proposition}[section]
\newtheorem{definition}{Definition}[section]
\newtheorem{corollary}{Corollary}[section]
\theoremstyle{definition}
\newtheorem{remark}{Remark}[section]
\DeclareMathOperator{\supp}{supp}
\DeclareMathOperator{\Ker}{Ker}
\DeclareMathOperator{\spec}{spec}
\DeclareMathOperator{\Ran}{Ran}
\newcommand{\eps}{\varepsilon}
\newcommand{\R}{{\bf R}}
\newcommand{\Id}{\mbox{Id}}
\renewcommand{\r}[1]{(\ref{#1})}
\newcommand{\PDO}{$\Psi$DO}
\newcommand{\be}[1]{\begin{equation}\label{#1}}
\newcommand{\ee}{\end{equation}}
\renewcommand{\d}{\mathrm{d}}
\newcommand{\bo}{\partial \Omega}
\renewcommand{\L}{\mathcal{L}}
\title[Multiwave tomography with reflectors]{Multiwave tomography with reflectors: Landweber's iteration}
\author[P. Stefanov]{Plamen Stefanov}
\address{Department of Mathematics, Purdue University, West Lafayette, IN 47907}
\thanks{First author partly supported by  NSF  Grant DMS-1301646}
\date{\today}
\author[Yang Yang]{Yang Yang}
\address{Department of Mathematics, Purdue University, West Lafayette, IN 47907}
\begin{document}
\begin{abstract}
We use the Landweber method for numerical simulations for the multiwave tomography problem with a reflecting boundary and compare it with the averaged time reversal method. We also analyze the rate of convergence and the dependence on the step size for the Landweber iterations on a Hilbert space. 
\end{abstract}
\maketitle

\section{Introduction} 
In this paper, we apply the well known   Landweber iteration method for the multiwave tomography  problem with reflectors. We also compare it to the Averaged Time Reversal (ATR) method developed by the authors in \cite{St-Y-AveragedTR} leading to an inversion of a Neumann series.  We also present a convergence analysis for the Landweber method for linear problems in Hilbert spaces. Even though the Landweber iteration method is widely used and well studied, 
we prove some results that we have not found in the literature. 
In numerical inversions, we always work in finite dimensional spaces, but there are good reasons to understand the method better in the continuous model first, and then to understand how well the discretization approximates the continuous model. 
As we demonstrate in this work, for the inverse problem we study involving wave propagation, the discrete model based on a second order finite difference scheme is always unstable regardless of whether the continuous one is stable or not. On the other hand, stable problems in our case behave in a stable way numerically, even though they are not (when discretized). We analyze the convergence rates of the iterations, their relation to the a priori stability of the problem,  of the spectrum of the operator and on the spectral measure of the object we want to recover, and present numerical evidence of those results. 

While there are more advanced computational inversion methods for general linear or non-linear inverse problems, including such with regularization, our goal is not to develop ``the best one''.  Without real life data and understanding the real measurements challenges, that would be not so useful anyway. We would like to have methods that would allow us to test the mathematics of the problem: how the geometry affects the stability, for example; and partial data on the boundary. We are also interested in how the discretization and the numerical solver to model the PDE affects the problem, regardless of how it is solved after that. 

The multiwave tomography  problem with reflectors (the term ``cavity'' is often used, as well) we study models  the following. We send one type of waves to an object we want to image, like a part of the human body like microwave radiation (in thermoacoustic tomography), laser rays (in photoacoustic tomography) or elastic waves. Those waves have a low resolution (the laser ray would diffuse inside) but they are partly absorbed by the cells, which  cause them to emit ultrasound waves of high enough frequency allowing for a high resolution. The emitted sound waves are measured on the boundary, or on a part of it, and we want to recover the source; after that, we want to recover the absorption rate. We do not study the latter problem here. 

This problems is well  studied under the assumption that the waves travel freely away from the body, see, e.g., \cite{finchPR,Kruger03,Kruger99, KuchmentK_11,XuWang06, SU-thermo, SU-thermo_brain, QSUZ_skull, S-U-InsideOut10, ScherzerL, ScherzerL2,Arridge_2016} and the references there. In some experimental setup, see  \cite{CoxAB_07}, reflectors are placed around the body and the measurements are made on a part of it. The mathematical model of this is presented below in \r{BVP}, first studied in  \cite{KunyanskiHC_2014,H_Kunyanski_14, ScherzerL, ScherzerL2}. In \cite{St-Y-AveragedTR} and \cite{KunyanskyNguyen2015}, see also \cite{Acosta_M}, two different stable ways to solve the problem were proposed, both based on exponentially convergent Neumann series, expanding some ideas introduced first in \cite{SU-thermo}. The inversion in \cite{Acosta_M, KunyanskyNguyen2015} works for the case of partial boundary data as well; while  in this case, in \cite{St-Y-AveragedTR}, we know that it gives a parametrix for partial boundary data but there is no proof yet that in converges to a solution. We tested it in that case anyway, and we also do it in this paper. Recently,  Acosta and Montalto  \cite{AcostaM_16} studied a model taking account attenuation, see also \cite{Andrew13}.

Landweber iterations for the whole space problem were done recently in \cite{ScherzerL, ScherzerL2} and compared to the sharp time reversal method in \cite{SU-thermo} implemented numerically in \cite{QSUZ_skull}. Another recent work is \cite{Arridge_2016}, where several different optimizations are compared, based on different regularization terms. 

\section{The model  and known theoretical results} 
Let $\Omega\subset\mathbb{R}^n$ be a bounded open subset. 
 Consider 
\begin{equation}  \label{BVP}
\left\{\begin{array}{rcll}
(\partial^2_t - c^2(x) \Delta )u = & 0 & \quad\quad \text{ in } (0,T)\times\Omega,  \\
\partial_\nu u |_{(0,T)\times\partial\Omega} = & 0,  & \\
u|_{t=0} = & f, & \\
\partial_t u|_{t=0} = & 0 ,&
\end{array} \right.
\end{equation}
where $\partial_\nu$ is the normal derivative. One could consider a Riemannian metric $g$ and then $\Delta$ would be the Laplace-Beltrami operator, see  \cite{St-Y-AveragedTR} and section~\ref{sec_ATR}. 
The measurement operator is
\be{L}
\mathcal{L}:f\mapsto u|_{(0,T)\times\partial\Omega}.
\ee
The goal is to inverse $\mathcal{L}$ and recover $f$. In the partial data problem, we are given 
\be{LG}
\mathcal{L}_\Gamma:f\mapsto u|_{(0,T)\times\Gamma},
\ee
where $\Gamma\subset \bo$ is a relatively open subset.

We recall some uniqueness and stability results about this problem, see also \cite{St-Y-AveragedTR}. Uniqueness follows from unique continuation \cite{tataru95}. The sharpest time $T$ for uniqueness if given by 
\be{Pr1}
T_0 := \max_{x\in\bar\Omega_0}\text{dist}(x,\Gamma),
\ee
where $\text{dist}(x,\Gamma)$ is the distance between a point and a set in the metric $c^{-2}\d x^2$. If $T<T_0$, $\L f$ determines $f$ uniquely only at the points staying at distance less than $T$ from $\Gamma$. If $T>T_0$, there is a global uniqueness.

A sharp stability condition follows from  the  Bardos, Lebeau and Rauch \cite{BardosLR_control}. In view of the intended applications, we would assume that $\bo$ is strictly convex and that $\supp f$ is compactly supported in $\Omega$ but \cite{BardosLR_control} covers the general case.

\begin{definition} \label{def1}
Let $\bo$ be strictly convex with respect to $c^{-2}\d x^2$. 
Fix  $\Omega_0\Subset \Omega$, an open  $\Gamma\subset\Omega$ and $T>0$.

 (a)  We say that the stability condition is satisfied if every broken unit speed geodesic $\gamma(t)$  with $\gamma(0)\in \bar\Omega_0$ has at least one common  point  with $\Gamma $ for $|t|<T$, i.e., if    $\gamma(t)\in \Gamma$ for some $t\in (-T,T)$. 
 
 (b) We call the point $(x,\xi)\in T^*\bar\Omega_0\setminus 0$ a {visible singularity} if the unit speed geodesic $\gamma$ through $(x,\xi/|\xi|)$ has a common point with $\Gamma$ for $|t|<T$. We call the ones for which $\gamma$ never reaches $\bar\Gamma$ for $|t|\le T$ invisible ones. 
\end{definition} 

We denote by $T_1=T_1(\Gamma,\Omega)$ the least upper bound of all $T$ for which the stability condition is satisfied. 

As mentioned in \cite{St-Y-AveragedTR}, the visible and invisible singularities do not cover the while $T^*\Omega_0\setminus 0$ since some rays may hit $\partial\Gamma$ or $t=T$. The complement of those two sets is of measure zero however. 

Visible singularities can be recovered stably, and non-visible cannot. We will not give formal definitions (see, e.g., \cite{SU-JFA09}). In numerical computations, this means that visible singularities, like edges, etc., can in principle be recovered well (the would look ``sharp'') but  the actual reconstruction may require some non-trivial efforts. Non-visible singularities cannot be recovered well and the typical way to deal with this is to recover some regularized version of them, like blurred edges. 

If the stability condition is satisfied, them one has an $H^1\to H^1$ stability  estimate \cite{St-Y-AveragedTR}. The proof of this follows from the fact that $\L$ is an FIO of order $0$ associated with a local diffeomorphism. That diffeomorphism can be described in the following way. For every $(x,\xi)\in T^*\Omega_0\setminus 0$, we take the geodesic $\gamma_{x,\xi/|\xi|}(t)$, where $|\xi|$ is the norm of the covector $\xi$ at $x$ in the metric $c^{-2}\d x^2$. We also identify vectors and covectors by the metric. When $\gamma_{x,\xi/|\xi|}$ hits $\bo$, we take that point and the projection of the tangent vector on $\bo$. That correspondence is a local diffeomorphism, and here it is essential that we have guaranteed that $\gamma_{x,\xi/|\xi|}$ hits $\bo$ transversely. By the stability condition, such a contact with $\bo$ exists. After that, we reflect the geodesic by the usual law of reflection, and look for another contact, etc. We consider both positive and negative $t$. That would give us a multi-valued map but locally (near the starting singularity and locally near the image), each branch is a diffeomorphism. Then $\L$ is a restriction of a first order elliptic FIO (say, the same operator but defined on a larger open set containing the closure of  $(0,T)\times\Gamma$), to $(0,T)\times\Gamma$. In particular, that means that it is bounded from $L^2(\Omega_0)$ to $L^2((0,T)\times\Gamma)$. We consider $L^2(\Omega_0)$ as subspace of $L^2(\Omega)$. 
The stability condition guarantees that at least one of the contacts with $\bo$ is interior for that set, and this is enough for building a microlocal parametrix. This, together with the uniqueness result, implies the following theorem.  We refer to \cite{SU-JFA09} for a more general case, and to \cite{BardosLR_control} for this particular one. 

\begin{theorem}\label{thm_st}
Let $\bo$ be strictly convex and fix  $\Omega_0\Subset \Omega$, an open $\Gamma\subset\Omega$ and $T>0$. Then if the stability condition is satisfied, i.e., if $T>T_1$, then there exist $0<\mu\le C$ so that
\[
\mu \|f\|_{L^2(\Omega_0)}\le \|\L f\|_{L^2((0,T)\times\Gamma)}\le C\|f\|_{L^2(\Omega_0)}, \quad\forall f\in L^2(\Omega_0). 
\]
\end{theorem}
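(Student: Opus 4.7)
The plan is to upgrade the FIO sketch given in the paragraph preceding the theorem into a quantitative two-sided bound. The upper bound is essentially free: standard energy estimates for \r{BVP}, a trace theorem, and the fact that $\mathcal{L}$ is a zeroth-order FIO give $\|\L f\|_{L^2((0,T)\times\Gamma)}\le C\|f\|_{L^2(\Omega_0)}$ directly (one may extend $f$ by zero and note that $\L$ is the restriction to $(0,T)\times\Gamma$ of an operator bounded on $L^2$ of a slightly larger set). The substantive content is the lower bound, and I would prove it in three steps.

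\textbf{Step 1 (microlocal parametrix).} Using the broken bicharacteristic picture described just before the theorem, I construct a left parametrix $P$ for $\L$ acting on distributions supported in $\bar\Omega_0$. For each $(x,\xi)\in T^*\bar\Omega_0\setminus 0$, the stability condition together with strict convexity of $\bo$ guarantees a branch of the broken geodesic through $(x,\xi/|\xi|)$ that meets $\Gamma$ transversally at some time $|t|<T$; on the corresponding microlocal piece of $\L$ (a local elliptic FIO of order $0$ associated with a local diffeomorphism) I can write down a microlocal inverse. Glueing with a microlocal partition of unity in $T^*\bar\Omega_0\setminus 0$ yields an operator $P$ with
\[
P\L = \Id + K\quad\text{on }L^2(\Omega_0),
\]
where $K$ is smoothing and hence compact on $L^2(\Omega_0)$.

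\textbf{Step 2 (injectivity).} Tataru's unique continuation \cite{tataru95}, combined with the fact that $T>T_1\ge T_0$ (cf.\ \r{Pr1}), implies that $\L f=0$ forces $f=0$, so $\L$ is injective on $L^2(\Omega_0)$.

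\textbf{Step 3 (remove the compact term).} From Step 1 and boundedness of $P$ I obtain
\[
\|f\|_{L^2(\Omega_0)} \le C\,\|\L f\|_{L^2((0,T)\times\Gamma)} + \|Kf\|_{L^2(\Omega_0)}.
\]
A standard compactness/contradiction argument then absorbs the last term: if no $\mu>0$ worked, one could extract $f_n$ with $\|f_n\|=1$ and $\|\L f_n\|\to 0$; compactness of $K$ passes to a subsequence along which $Kf_n$ converges, the displayed inequality shows $\{f_n\}$ is Cauchy, and the $L^2$-limit $f$ would satisfy $\|f\|=1$ and $\L f=0$, contradicting Step~2.

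The main obstacle is Step 1: one must control the canonical relation of $\L$ across all reflections and handle covectors whose geodesic grazes $\partial\Gamma$ or exits at time exactly $T$. Strict convexity rules out tangential reflections (keeping the broken flow a genuine local diffeomorphism at each strike with $\bo$), and the strict inequality $T>T_1$ provides the extra room needed so that the ``bad'' set of covectors has measure zero and can be covered by a microlocal partition whose members each admit an inverse. Once this FIO calculus is in place, Steps 2–3 are soft.
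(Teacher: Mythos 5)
Your argument follows the same route as the paper's: $\L$ is an elliptic FIO of order $0$ associated with a local diffeomorphism (giving the upper bound), the stability condition plus strict convexity yields a microlocal parametrix with compact/smoothing error, and injectivity from Tataru's unique continuation removes the compact term by the standard argument, which the paper delegates to \cite{SU-JFA09}. One small remark: under $T>T_1$ every covector over $\bar\Omega_0$ is visible with a transversal contact interior to $(0,T)\times\Gamma$, so no measure-zero ``bad set'' needs to be excised in your Step 1 --- that issue is only relevant in the paper's general discussion of visible versus invisible singularities, not under the hypothesis of this theorem.
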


In fact, in the reconstructions, we work in the topologically equivalent space $L^2(\Omega_0,c^{-2}\d x)$. This is the measure appearing in the energy  $\int_\Omega(|\nabla f_1|^2+|f_2|c^{-2})\,\d x$; and our numerical simulations show that the error is smaller if we use that measure in the definition of $\L^*$ later.

If there are invisible singularities, then the estimate above cannot hold  with $\mu>0$ and it fails even if we replace the $L^2$ norm by weaker Sobolev ones. 

We review the Averaged Time Reversal method in section~\ref{sec_ATR}. One of the essential differences is that there, the energy space is (topologically equivalent to) $H^1(\Omega)$ while in the Landweber iterations we choose to work in $L^2(\Omega)$. We could do Landweber iterations considering $\L$ as an operator from $H_0^1(\Omega)$ to $H^1((0,T)\times\Gamma)$, as well or even as an operator from $H_0^1(\Omega)$ to $L^2((0,T)\times\Gamma)$. In fact, we tried the latter numerically. Then $\L$ is a smoothing operator of degree $1$ (an elliptic FIO of order $-1$ associated with a local diffeomorphism), and this makes the problem always unstable in those spaces. The reconstructions we get are very blurred as one would expect. 

Finally, if one wants to achieve some regularizing effect with a regularizing parameter, similar to Tikhonov regularizations, one could introduce the following $H^1$ norm
\[
\|f\|_{H^1_\eps}^2 = \int_{(0,T)\times\Omega }\left( |f|^2c^{-2} +\eps |\nabla f|^2\right) \d x,
\]
where $\eps>0$ is a parameter. Then we think of $\L$ as the operator $\L:H_\eps^1(\Omega)\to L^2((0,T)\times\bo)$, restricted to functions supported in  $\Omega_0$. 
 This replaces $\L^*$ in the inversions by $(1-\eps c^2\Delta_D)^{-1}\L^*$, where $\Delta_D$ is the Dirichlet Laplacian in $\Omega$. Another modification would be to keep $L^2(\Omega,c^{-2}\d x)$ but to replace $L^2((0,T)\times\bo)$ by a certain $H^{-1}$ space with a parameter. This would put a low pass frequency filter to the right of $\L^*$ rather than to the left. One such  reconstruction is shown in Figure~\ref{unstable_noise_reconstruction}.

The plan of this work is the following. We review the Landweber iterations in section~\ref{sec_Land} and study the convergence or the divergence, the dependence on the step $\gamma$, etc. In section~\ref{sec_TAT}, we compute the adjoint of the measurement operator $\L$ with full or partial data. We present numerical examples in section~\ref{sec_numeric}. A review of the averaged time reversal method is included in section~\ref{sec_ATR}. 

\textbf{Acknowledgments.} The authors thank Francois Monard and Jianliang Qian for their advice during the preparation of this work. Reference \cite{Zuazua-numerics} was pointed out to the authors by Francois Monard.

\section{Landweber iterations}\label{sec_Land} 
As explained in the Introduction, one of the goals of this paper is to analyze further the Landweber iterations method for linear inverse problems, its relation to the stability or the instability of the problem and to the power spectrum of the function we want to recover (defined as the differential $\d\mu_f(\lambda)$ of $\|P_\lambda f\|^2$, where $P_\lambda$  is the spectral projection  of $\L^*\L$). We also study the effect of the choice of the step $\gamma$ in the iterations and how they are influenced by noise or data not in the range. We do not study regularized versions of the iterations for severely ill posed linear problems, or non-linear problems. We do not study stopping criteria and their effect on the error, either. 
For analysis of the linear Landweber method, see, e.g.,  \cite{Vainikko_80, VainikkoV_book,Hanke_91, Byrne06, Kirsch_book} and the references there. For the non-linear one, see, e.g., \cite{HankeNS_95}. Some of the analysis below can be found in the literature, for examaple in \cite{Hanke_91}, but we did not find all of it in the literature. 

\subsection{Landweber Iterations}\label{sec_L1} 
Let $\L:\mathcal{H}_1\to \mathcal{H}_2$ be a bounded operator where $\mathcal{H}_{1,2}$ are Hilbert spaces. We want to solve the linear inverse problem of inverting $\L$. Assume no noise first.  Then we write 
\be{Pf}
\L f=m
\ee
in the form
\be{P0}
(I - (I-\gamma \L^* \L))f = \gamma  \L^* m.
\ee
The operator $K := I-\gamma \L^* \L$ is self-adjoint with spectrum in the interval 
\begin{equation}\label{P1}
\left[1-\gamma \|\L\|^2, 1-\gamma\mu^2 \right],
\end{equation}
where $\mu\ge 0$ is any of the stability constants in the stability estimate
\be{Sstab}
\mu \|f\|\le \|\L f\|,
\ee
which might be zero if there is no stability or even injectivity. We   choose $\mu$ to be the largest number with that property, i.e.,  $\mu^2$ is the  bottom of the (closed) spectrum   of $\L^* \L$. If $\L^*\L$ has a discrete spectrum, then $\mu$ would be the smallest singular value. In other words, $\|\L\|$ and $\mu$ are the sharp constants for which
\be{Lc}
0\le \mu^2 \le \L^* \L \le \|\L\|^2.
\ee
Then \r{P1} is the smallest closed interval containing the spectrum $\spec(K)$ of $K$ but the spectrum itself could have gaps.

By \r{P1}, we see that $K$ is a strict contraction ($\|K\|<1$) if and only if  
\be{P2}
  -1< 1-\gamma \|\L\|^2,\quad 1-\gamma \mu^2  <1,
\ee
i.e.,
\be{P3}
0<\gamma<\frac{2}{\|\L\|^2}, \quad 0<\mu.
\ee
In other words, $\gamma$ has to be small enough and the problem has to be stable. Then $f$ can be reconstructed by the following Neumann expansion
\be{S4}
\sum_{j=0}^\infty K^j\gamma\L^*  m = \sum_{j=0}^\infty ( I-\gamma \L^* \L )^j \gamma \L^* m
\ee
which converges uniformly and exponentially because 
\[
\|( I-\gamma \L^* \L )^j\L^* m\|\le \|K\|^j \|\L^*m\|. 
\]
This implies the following scheme. 

\textbf{Landweber Iterations.} Set
\be{Land}
\begin{split}
f_0 = & 0, \\
f_k = & f_{k-1} - \gamma \mathcal{L}^\ast (\mathcal{L}f_{k-1} - m), \quad\quad k=1,2,\dots .
\end{split} 
\ee
The scheme is used even when the data $m$ is ``noisy'', i.e., when $m\not\in \Ran\L$, or when there is no uniqueness. 
Sometimes, the following criterion is used. 
Let $C>1$ be a prescribed constant. The iteration is terminated when the condition
$$\| \mathcal{L} f_k - m\|_{\mathcal{H}_2} < C\delta $$
is violated for the first time, where $\delta$ is an a priori bound of the noise level, i.e., a constant for which $\|m-\L f\|\le \delta$, where $f$ is the unknown function we want to reconstruct and we think of $m$ as a noisy measurement.  
Then we take $f_k$ for such a $k$ as an approximation of $f$. If  there is stability, i.e., when $\mu>0$, this implies  $\|f_k-f\|\le C\delta/\mu$. 

It is well known that Landweber Iterations is just a gradient decent method for the functional
\[
f\longmapsto \|\L f-m\|^2_{\mathcal{H}_2}.
\]
This makes some of the properties we describe below more geometric; for example if $m\not\in\Ran\L$, a minimizer, if exists, would be the same as a minimizer for $m$ replaced by $m$ projected on $\overline{\Ran\L}$. 

\begin{figure}[h!] 
  \centering
  \includegraphics[trim = 10mm 15mm 10mm 0mm, clip, scale=0.5]{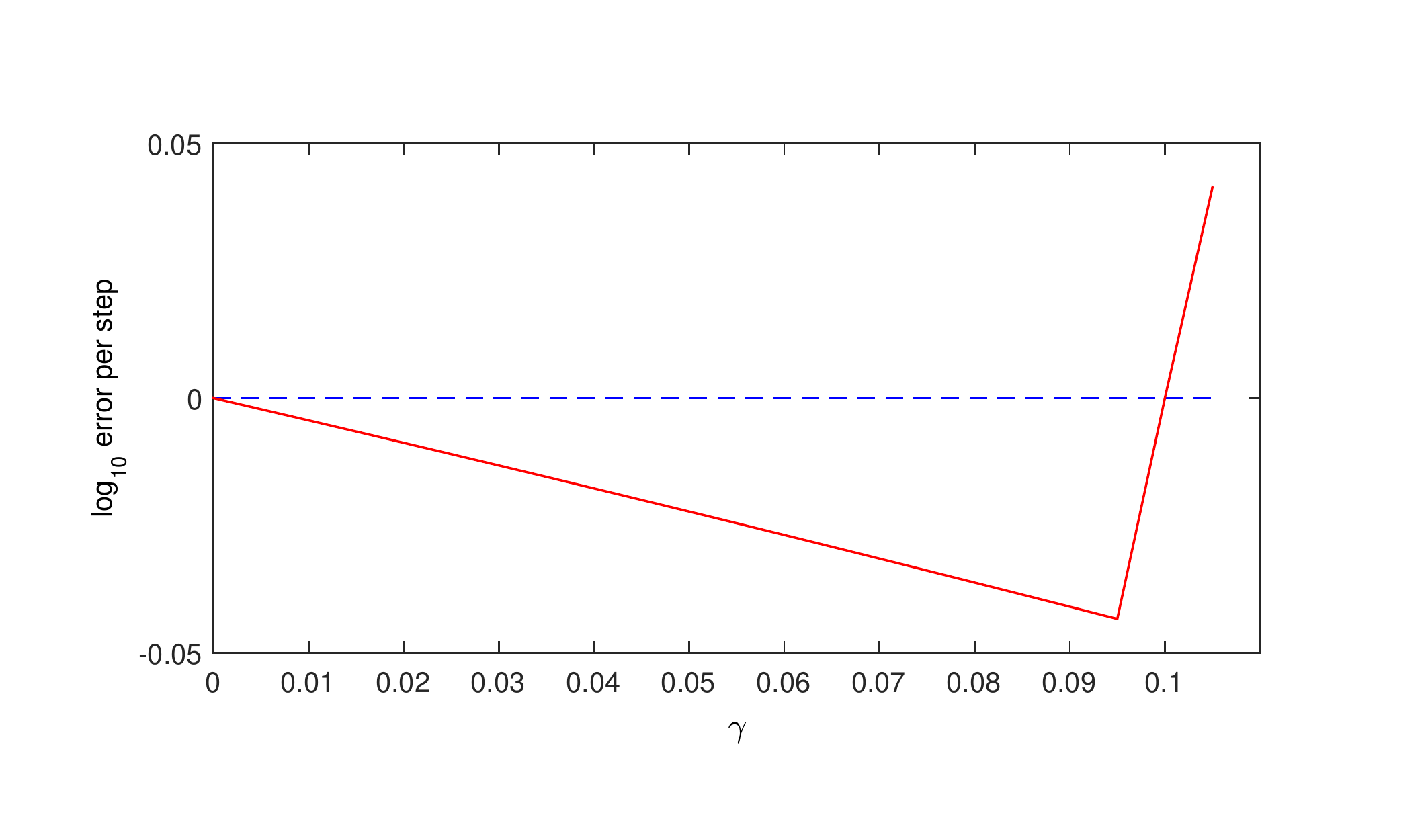}
\caption{\small $\log_{10}$ error at each step vs.\ $\gamma$ with exact data.  This is the graph of $\log_{10}\| K\|$ as a function of $\gamma$ for $\mu^2=1$, $\|\L\|^2=20$. The optimal $\gamma^*$ is $\gamma^*=2/21\approx0.0952$. The error increases fast on the right of $\gamma^*$.   }
\label{fig_theoretical_error}
\end{figure}

Let us denote $K$ by $K_\gamma$ now. To estimate the rate of convergence, we need to estimate how close $\|K_\gamma\|$ is to $1$. By \r{P1} and \r{P2},
 \be{norm_K}
\|K\|= \max\left\{ \left|1-\gamma \|\L\|^2\right|, |\gamma\mu^2-1|\right \} = \max\left\{  |1-\gamma \|\L\|^2|, 1-\gamma\mu^2\right \}.
\ee
In Figure~\ref{fig_theoretical_error}, we plot $\log_{10}\|K\|$ for $\mu^2=1$, $\|\L\|^2=20$ for a range of $\gamma$'s. This gives us the asymptotic rate of convergence in the uniform norm, since the error if we use the $N$-th partial sum is $(I-\|K\|)^{-1}\|K\|^N$, which $\log_{10}$ is $N\log_{10}\|K\|-\log_{10}(I-\|K\|)$. In Figure~\ref{fig_errors_10_30_50}, we present a numerical evidence of that behavior. Note that in the analysis below, with exact data, the factor $(I-\|K\|)^{-1}$ is actually removed, see \r{P8}, for example.

We have
\be{P4}
1-\|K_\gamma\| = \min\left(\gamma\mu^2, 2-\gamma\|\L\|^2\right)=: \nu. 
\ee
Since the expressions in the parentheses are respectively an increasing and a decreasing function of $\gamma$, the maximum is achieved when they are equal, i.e., for $\gamma$ equal to 
\be{P5}
\gamma^* = \frac{2}{\mu^2+\|\L\|^2}.
\ee
With $\gamma=\gamma^*$, 
\be{P5c}
1-\|K_{\gamma^*}\| = \frac{2\mu^2}{\mu^2+\|\L\|^2} = :\nu^*,
\ee
which is well known, see e.g.,  \cite[pp.66-67]{Byrne06}. 
Then the series is dominated by the geometric series $C\sum (1-\nu^*)^j$ which converges exponentially, and therefore, \r{S4} converges also uniformly (in the operator norm).  When $0<\nu^* \ll1$ however, the speed of convergence decreases  and the sensitivity to computational errors increases. 

For general $\gamma\in (0,2/\|\L\|^2)$, the Landweber iterations are dominated by $C\sum (1-\nu)^j$ (and this is sharp), see \r{P4}. Note that $\nu$ as a function of $\gamma$ is piecewise linear with a maximum at $\gamma^*$, indeed but since the slope of the second factor in \r{P4} is larger (and typically, much larger) by absolute value (which is $\|\L\|^2$) than the slope of the first one, $\mu^2$, increasing $\gamma$ form some very small value to $\gamma^*$ would improve the convergence gradually but after passing $\gamma^*$, the convergence will deteriorate very fast. We observed this behavior in our tests. Note that the ratio of the two slopes is the condition number $\|\L\|^2/\mu^2$ of $\L^*\L$. 

If $\mu=0$, then $\|K\|=1$. We can use the fact however that $K^j$ is applied to $\L^*m$ only.
Then $f$ can be reconstructed by the following formula, see \r{S4}, 
\be{P6}
\sum_{j=0}^\infty (I-\gamma \L^*\L)^j \gamma \L^*\L f
\ee
(with $\L f=m$ given). A zero eigenvalue of $\L^*\L$ would actually create a series of zeros, that still converges. Below, we actually allow $\L$ to have a non-trivial kernel $\Ker\L=\Ker\L^*\L$ and study $f$'s modulo that kernel. 

\begin{proposition}\label{pr_1} The following statements are equivalent. 

(a) \r{Sstab} holds  with $\mu>0$ for $f\perp\Ker\L$,

(b) $\spec(\L^*\L)$ has a gap $(0,\mu)$ with some $\mu>0$,

(c) $\Ran \L$ is closed. 
\end{proposition}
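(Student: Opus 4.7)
The plan is to prove the equivalences by the two-step cycle \textbf{(a)} $\Leftrightarrow$ \textbf{(b)} (via the spectral theorem applied to the self-adjoint nonnegative operator $A:=\L^*\L$) and \textbf{(a)} $\Leftrightarrow$ \textbf{(c)} (via Cauchy sequences in one direction and the open mapping theorem in the other). The key identities I will use throughout are $\|\L f\|^2=\langle Af,f\rangle$ and $\Ker A=\Ker\L$; the latter follows because $Af=0$ implies $0=\langle Af,f\rangle=\|\L f\|^2$. Consequently $(\Ker\L)^\perp=(\Ker A)^\perp$ is an $A$-invariant subspace, which lets me reduce all questions to the restriction $A_0:=A|_{(\Ker\L)^\perp}$.

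For \textbf{(a)} $\Rightarrow$ \textbf{(b)}: from (a) I get $\langle A_0 f,f\rangle\ge\mu^2\|f\|^2$ on $(\Ker\L)^\perp$, hence $A_0\ge\mu^2 I$ as self-adjoint operators. By the spectral theorem $\spec(A_0)\subset[\mu^2,\|\L\|^2]$, while $A$ on $\Ker\L$ is identically zero, so $\spec(A)\subset\{0\}\cup[\mu^2,\|\L\|^2]$, which is the desired gap. For \textbf{(b)} $\Rightarrow$ \textbf{(a)}: let $\{E_\lambda\}$ be the spectral resolution of $A$. The gap $(0,\mu)$ means $E_\mu-E_0=0$, so $E_0$ is the orthogonal projection onto $\Ker A=\Ker\L$. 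For any $f\perp\Ker\L$ I then have $f=(I-E_0)f$, and
\[
\|\L f\|^2=\langle Af,f\rangle=\int_{[\mu,\|\L\|^2]}\lambda\,\d\langle E_\lambda f,f\rangle\ge\mu\|f\|^2,
\]
which gives the stability estimate (with constant $\sqrt{\mu}$).

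For \textbf{(a)} $\Rightarrow$ \textbf{(c)}: take any sequence $\L f_n\to g$ in $\mathcal{H}_2$. Decomposing $f_n=k_n+h_n$ with $k_n\in\Ker\L$ and $h_n\perp\Ker\L$, the images satisfy $\L h_n=\L f_n\to g$, and (a) gives $\|h_n-h_m\|\le\mu^{-1}\|\L h_n-\L h_m\|\to 0$. Hence $h_n\to h$ for some $h$, and by continuity $g=\L h\in\Ran\L$. For \textbf{(c)} $\Rightarrow$ \textbf{(a)}: the operator $\L$ restricts to a continuous bijection
\[
\widetilde{\L}:(\Ker\L)^\perp\longrightarrow\Ran\L,
\]
and when $\Ran\L$ is closed, both spaces are Hilbert spaces. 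By the open mapping theorem $\widetilde{\L}^{-1}$ is bounded, and its operator norm yields the constant $\mu>0$ in (a).

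The only mild subtlety (rather than a real obstacle) is keeping track of the fact that when $\Ker\L$ is nontrivial, $0$ is always in $\spec(A)$, so the gap in (b) must be read as an open interval $(0,\mu)$ with no spectrum, allowing $0$ itself as an isolated eigenvalue. Handling this by working throughout with $(\Ker\L)^\perp$ keeps the argument clean and uniform.
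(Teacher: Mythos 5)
Your proof is correct: the spectral-theorem argument for (a)$\Leftrightarrow$(b) and the Cauchy-sequence/open-mapping argument for (a)$\Leftrightarrow$(c) are exactly the ``standard application of functional analysis'' that the paper invokes without writing out (it simply cites \cite{SU-JFA09}), so you have supplied in full the intended proof. The only cosmetic point is the mismatch between the constant in (a) and the endpoint of the gap in (b) ($\mu^2$ versus $\mu$), which you handle correctly since (b) only asserts the existence of \emph{some} $\mu>0$.
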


The proof is a standard application of functional analysis, see \cite{SU-JFA09}, for example.

\begin{definition}
We will call the problem stable,  if  either of the conditions above  holds.
\end{definition}

We always assume that $\mu$ is the maximal constant (which exists) for which Proposition~\ref{pr_1}(b) holds, when it does.  Such a condition is satisfied, if $\L^*\L$ is an elliptic \PDO, see for example \cite{SU-JFA09}.   

When the problem is stable, 
\be{PL}
\L :(\Ker\L)^\perp \longrightarrow \Ran\L
\ee
is invertible with a norm of the inverse equal to $1/\mu$. 
 
   A more detailed analysis is  done by the spectral theorem below. Let  
\[
S_N f = \sum_{j=0}^{N-1} (I-\gamma \L^*\L)^j\gamma \L^*\L f
\]
be the $N$-the partial sum in \r{P6}.

\textbf{Modifications.} If $f$ belongs to a subspace of $\mathcal{H}_1$, we can replace $\L$ by $\L\Pi^*$, where $\Pi$ is the orthogonal projection to that space ($\Pi\Pi^*=I)$ and then $\L^*\L$ is replaced by $\Pi\L^*\L\Pi^*$ and still apply the scheme. Also, for every bounded non-negative operator $P$ in $\mathcal{H}_2$, we can replace $\L^*\L$ by $\L^*P\L$. If $\mathcal{H}_2$ is an $L^2$ space with respect to some measure, a different choice of the measure would insert such a $P$ there. In that case, $P$ can be a   multiplication by a non-negative function, which can be used to satisfy compatibility conditions in the case we study, for example. 

\subsection{Exact data}\label{sec_exact}
\begin{theorem}[Inversion with exact data]\label{thm_LC}
Assume 
\be{Pgamma}
0<\gamma<2/\|\L\|^2.
\ee 

(a) 
Then for every $f\in \mathcal{H}_1$, the series \r{P6} converges to the orthogonal projection $f_1$ of $f$ to $(\Ker \L)^\perp$.  

(b) The series \r{P6} converges uniformly (i.e., in the operator norm) if and only if the problem is stable. In that case, it converges to   $f_1$   exponentially fast in the operator norm; more precisely, with $\nu$ as in \r{P4},
\be{Pnu}
\|S_Nf-f_1\|\le (1-\nu)^N\|f_1\|.
\ee
\end{theorem}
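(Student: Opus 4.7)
The plan is to run everything through the spectral theorem applied to the bounded non-negative self-adjoint operator $A:=\L^*\L$, whose spectrum lies in $[0,\|\L\|^2]$ by \r{Lc}. Write $A=\int_0^{\|\L\|^2}\lambda\,\d E_\lambda$ for its spectral resolution. The kernel of $\L$ equals the kernel of $A$, which is the range of the spectral projection $E_{\{0\}}$; hence the orthogonal projection onto $(\Ker \L)^\perp$ is $P_1:=\int_{0<\lambda\le\|\L\|^2}\d E_\lambda=I-E_{\{0\}}$, and $f_1=P_1 f$.

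The first step is to recognize the partial sum $S_N$ as the functional calculus applied to a simple function. Telescoping the geometric series gives, for each $\lambda>0$, $\sum_{j=0}^{N-1}(1-\gamma\lambda)^j\gamma\lambda = 1-(1-\gamma\lambda)^N$, while at $\lambda=0$ the summand is identically $0$. So
\[
S_N = \int_{0<\lambda\le\|\L\|^2}\bigl(1-(1-\gamma\lambda)^N\bigr)\,\d E_\lambda,\qquad
S_N-P_1 = -\int_{0<\lambda\le\|\L\|^2}(1-\gamma\lambda)^N\,\d E_\lambda.
\]
Under the hypothesis \r{Pgamma}, $0<\gamma\lambda<2$ on $(0,\|\L\|^2]$, so $|1-\gamma\lambda|<1$ pointwise on the positive part of the spectrum.

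For (a), apply dominated convergence in the scalar spectral measure $\d\langle E_\lambda f,f\rangle$: the integrand $|1-\gamma\lambda|^{2N}$ is bounded by $1$ and tends to $0$ pointwise on $(0,\|\L\|^2]$, so $\|S_N f-f_1\|^2=\int_{\lambda>0}|1-\gamma\lambda|^{2N}\,\d\langle E_\lambda f,f\rangle\to 0$, giving strong convergence $S_N f\to f_1$ for every $f$.

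For (b), reduce the operator norm of $S_N-P_1$ to a supremum by the spectral theorem: $\|S_N-P_1\|=\sup\{|1-\gamma\lambda|^N : \lambda\in\spec(A),\ \lambda>0\}$. If the problem is stable, Proposition~\ref{pr_1} gives $\spec(A)\cap(0,\mu^2)=\emptyset$, and $|1-\gamma\lambda|$ is maximized on $[\mu^2,\|\L\|^2]$ at one of the endpoints, yielding $|1-\gamma\lambda|\le\|K_\gamma\|=1-\nu$ by \r{norm_K}--\r{P4}; therefore $\|S_N-P_1\|\le(1-\nu)^N$, which gives \r{Pnu} and uniform exponential convergence. Conversely, if the problem is unstable, Proposition~\ref{pr_1}(b) fails, so $0$ is a non-isolated point of $\spec(A)$ reached from above by spectral values $\lambda_k\downarrow 0$. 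For any such $\lambda_k$ the spectral measure of an arbitrarily small interval around $\lambda_k$ is nonzero, so one may pick unit vectors $f_k$ with spectral support in $(0,\lambda_k]$, for which $\|(S_N-P_1)f_k\|\ge (1-\gamma\lambda_k)^N\to 1$ as $k\to\infty$; thus $\|S_N-P_1\|\ge 1$ for every $N$, ruling out uniform convergence.

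The mildly delicate step is the converse in (b): one must argue that when $0$ is a limit point of the positive spectrum, there is enough spectral mass arbitrarily close to $0$ to produce test vectors realizing $\sup|1-\gamma\lambda|^N$. This is standard once one uses that every point of $\spec(A)$ carries positive spectral measure in every neighborhood of it, but it is the only place where the argument is more than formal manipulation with the functional calculus.
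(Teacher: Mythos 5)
Your proof is correct and follows essentially the same route as the paper's: the spectral theorem applied to $\L^*\L$ (the paper uses $|\L|=(\L^*\L)^{1/2}$, a cosmetic difference), the telescoping identity to write $S_N-P_1$ via the functional calculus, dominated convergence for the strong convergence in (a), and the identification of the operator norm with $\sup_{\lambda\in\spec,\lambda>0}|1-\gamma\lambda|^N$ combined with the spectral-gap characterization of stability (Proposition~\ref{pr_1}) for (b). Your explicit construction of near-maximizing test vectors in the unstable case is a valid substitute for the paper's citation of the norm formula for functions of self-adjoint operators.
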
  

\begin{remark}
As we showed above, $\nu$ is maximized for $\gamma=\gamma^*$. Then $\nu=\nu^*$, and  
\[
\|S_Nf-f_1\|\le \left(\frac{\|\L\|^2-\mu^2}{\|\L\|^2+\mu^2}\right)^{N}\|f_1\|.
\]

\end{remark}

\begin{proof}
Let 
\[
|\L| = \int_{\spec(|\L|)} \lambda\,\d P_\lambda
\]
be the spectral decomposition of $|\L|:=(\L^*\L)^{1/2}$, where $\d P_\lambda$ is the spectral measure supported on   $\spec(|\L|) = \spec(\L^*\L)$, which is contained in $[0,\|\L\|]$. In that representation, the series \r{P6} consists of the terms $(1-\gamma\lambda^2)^j\gamma \lambda^2$.

Recall that if $|\L|$ is an $N\times N$ matrix, then $P_\lambda = \sum_{\lambda_j\le\lambda}P_{\lambda_j}$, where $\lambda_j$ and $P_{\lambda_j}$ are the eigenvalues of $|\L|$ and the projections to the corresponding eigenspaces, respectively. Then $\d P_\lambda= \sum_{\lambda_j\le\lambda} P_{\lambda_j}\delta(\lambda-\lambda_j)\,\d\lambda$, and for any $f\in \R^N$, the spectral measure $\d\mu_f$ of $f$ we use below is given by $\d\mu_f = \d (P_\lambda f,f) =  \sum_{\lambda_j\le\lambda} \|P_{\lambda_j} f\|^2 \delta(\lambda-\lambda_j)\,\d\lambda$. In particular, for every $g\in L^2$, we have
\[
\int_{\spec(|L|)} g(\lambda) \,\d\mu_f  = \sum_{\lambda_j\le\lambda} |c_j|^2 g(\lambda_j),
\]
where $c_j$ are the scalar projections (the Fourier coefficients) of $f$ w.r.t.\ to an orthonormal basis of eigenfunctions corresponding to $\lambda_j$.

Using the identity $(1-q)(1+q+\dots+q^{N-1})= 1-q^N$, we get 
\[
S_N f = \int \sum_{j=0}^{N-1} (I-\gamma \lambda^2)^j \gamma\lambda^2 \, \d P_\lambda\, f = 
 \int    \left( 1-(1-\gamma \lambda^2)^N  \right) \d P_\lambda\, f.
\]
Let $f=f_0+f_1$ be the orthogonal decomposition of $f$ with $f_0\in \Ker \L= \Ker|\L|$ and $f_1\perp\Ker |\L|$. Clearly, $S_Nf_0=0$. For $S_N f_1$ we have the same formula as above
with $\d P_\lambda$ replaced by $\d P_\lambda^1$, where the latter is the restriction of $\d P_\lambda$ to the orthogonal complement of $\Ker |\L|$. On that space, $\lambda=0$ is not in  the pure point spectrum $\spec_\text{pp}(|\L|) = \spec_\text{pp}(\L^*\L)$ but may still be in the spectrum. We refer to \cite{Reed-Simon3} for the definition and the basic properties of the pure point and continuous spectra of bounded self-adjoint operators. 
Then
\be{P8}
\|S_N f-f_1\|^2 = \int_{\spec(|\L|)}  |1-\gamma \lambda^2|^{2N} \d\mu_{f_1},
\ee
where $\d\mu_{f_1}= \d(f_1, P_\lambda f_1)$ is the spectral measure associated with $f_1$. The integrand converges to  $g(x)= 0$  for $x\not=0$ and $g(0)=1$, as $N\to\infty$. It is bounded by the integrable function $1$, see \r{P2}. By the  Lebesgue dominated convergence theorem, the limit in \r{P8}, as $N\to\infty$, is $\int_{\spec(|\L|)}g\, \d\mu_{f_1}$ and the latter is zero because $\mu_{f_1}(\{0\})=0$. 

We will provide a more direct proof of the latter revealing better the nature of the problem. We claim  that
\be{P9}
\lim_{\delta\to 0+}\int_{\spec(|\L|)\cap [0,\delta)}  \d\mu_{f_1}=0. 
\ee
Indeed, as $\delta\searrow0$, the integral above (which is just the measure of $\spec(|\L|)\cap [0,\delta)$ with respect to $\mu_{f_1}$) is a monotonically decreasing non-negative function of $\delta$ and therefore, it has a limit which is its infimum. By the properties of Borel measures, if the limit is positive, that means that $\{0\}$ has a positive measure.  Therefore, $0$ would be  in the pure point spectrum which we eliminated by restricting $\L$ on the orthogonal complement of $\text{Ker}(|\L|)$. Therefore, the limit in \r{P9} is zero, as claimed. 

Choose $\eps>0$ now. Let $\delta>0$ be such that the l.h.s.\ of \r{P9} does not exceed $\eps$. Then by \r{P8}, 
\be{P10}
\begin{split}
\|S_N f-f_1\|^2 &= \int_{\spec(|\L|)\cap [0,\delta)}  |1-\gamma \lambda^2|^{2N} \d\mu_{f_1} + \int_{\spec(|\L|)\cap [\delta,\infty)}  |1-\gamma \lambda^2|^{2N} \d\mu_{f_1}\\
 &\le \eps + \int_{\spec(|\L|)\cap [\delta,\infty)}  |1-\gamma \lambda^2|^{2N} \d\mu_{f_1}. 
\end{split}
\ee
For $\lambda\ge\delta$ in $\spec(|\L|)$, the term $|1-\gamma \lambda^2|$ is bounded away from $1$, and therefore tends to $0$ exponentially fast, as $N\to\infty$. Choose $N\gg1$ so that the last integral in \r{P10} does not exceed $\eps$. Then $\|S_N f-f_1\|^2\le 2\eps$ for $N\gg1$, which completes the proof of (a).  

This argument shows that the rate of convergence depends heavily on the spectral density (measure) of $f$ near $\lambda=0$ for $\lambda>0$ but not for $\lambda=0$. 

To prove (b), notice first that the norm of $S_N-I$ on $(\Ker\L)^\perp$ is given by the maximum of $(1-\gamma\lambda^2)^N$ on $\spec(|\L|\Big |_{(\Ker\L)^\perp })$, see \cite[Theorem~VII.1(g)]{Reed-Simon3}. For uniform convergence, we need that maximum to be strictly less than $1$, which proves the first part of (b). When this happens, then $(1-\gamma\lambda^2)^N\le (1-\nu)^N$.
\end{proof}

 \subsection{Noisy data} 
Let the measurement $\L f$ be ``noisy'', i.e., we are given $m\in \mathcal{H}_2$ not in the range of $\L$ and want to ``solve'' $\L f=m$. A practical solution of such problem is to find some kind of approximation of the actual $f$ assuming that $m$ is close in some sense to the range. We will study what happens if we solve \r{P0} with a Neumann series without assuming \r{Pf}. 

\begin{theorem}[Noisy data]\label{thm_noisy} 
Assume \r{Pgamma}. Then the Neumann series \r{S4} converges for every $m\in \mathcal{H}_2$ if and only if the problem is stable. 
Moreover,

(a) If the problem is stable, 
 $\L^*\L$ is invertible on  $(\Ker\L)^\perp$  with a norm of the inverse $1/\mu^2$. Then the limit is the unique solution of $\L^*\L f=\L^* m$ (note that $\L^*m\in (\Ker\L)^\perp$) orthogonal to $\Ker\L$. 
The convergence is uniform with a rate (see  \r{Pnu}) $(1-\nu)^N/\mu$.

(b) If the problem is not stable, then there is no uniform convergence. Moreover, there is no even strong convergence. The set of $m$ for which \r{S4} is unbounded (and hence, diverges) is residual and in particular, dense,  i.e., its complement is of first Baire category. 
\end{theorem}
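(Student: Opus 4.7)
\smallskip
My plan is to unify both parts by studying the partial-sum operators $T_N := \gamma\sum_{j=0}^{N-1}K^j\L^*:\mathcal{H}_2\to\mathcal{H}_1$ through the functional calculus. The crucial algebraic observation is the intertwining $K\L^* = \L^*(I-\gamma\L\L^*)$, which iterates to give
\[
T_N = \L^*\phi_N(\L\L^*), \qquad \phi_N(s) := \gamma\sum_{j=0}^{N-1}(1-\gamma s)^j = \frac{1-(1-\gamma s)^N}{s},
\]
so $T_N^*T_N = \L\L^*\phi_N(\L\L^*)^2$ and $\|T_N\|^2 = \sup_{s\in\spec(\L\L^*)} s\phi_N(s)^2$. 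Since $\Ran\L^*\subset(\Ker\L)^\perp$ and $K$ preserves $(\Ker\L)^\perp$, the iterates land in that subspace, and the nonzero parts of $\spec(\L^*\L)$ and $\spec(\L\L^*)$ coincide, so Proposition~\ref{pr_1} translates directly into a statement about $\spec(\L\L^*)$ near $0$.

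For (a), stability gives $\spec(\L^*\L)\subset\{0\}\cup[\mu^2,\|\L\|^2]$, so $\L^*\L$ restricted to $(\Ker\L)^\perp$ is boundedly invertible with inverse of norm $1/\mu^2$. Set $T := (\L^*\L)^{-1}\L^*$, viewed as an operator $\mathcal{H}_2\to(\Ker\L)^\perp$; singular-value calculus (i.e., $TT^* = (\L^*\L)^{-1}$ on $(\Ker\L)^\perp$, rewritten via $\L\L^*$) gives $\|T\| = 1/\mu$. Summing the finite geometric series on $(\Ker\L)^\perp$ yields the operator identity $T_N = T - K^N T$, so
\[
\|T-T_N\| \le \|K\|^N\,\|T\| \le (1-\nu)^N/\mu,
\]
which is exactly the stated rate, and $(\L^*\L)(Tm)=\L^*m$ by construction, with $Tm\perp\Ker\L$ for free. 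Since $\L^*m\in(\Ker\L)^\perp$, this is the unique such solution.

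For (b), suppose the problem is not stable. Then $0\in\spec(\L^*\L|_{(\Ker\L)^\perp})$ but $0$ is not an eigenvalue there, hence $0$ is an accumulation point of $\spec(\L^*\L)\setminus\{0\}$, and equivalently of $\spec(\L\L^*)\setminus\{0\}$. For any $s_*>0$ in this spectrum we have $s_*\phi_N(s_*)^2\to 1/s_*$ as $N\to\infty$ (more concretely, as soon as $(1-\gamma s_*)^N<1/2$, $s_*\phi_N(s_*)^2>1/(4s_*)$). Since we may take $s_*$ arbitrarily small, $\sup_N\|T_N\| = \infty$. Uniform boundedness (Banach--Steinhaus) then gives that $\{m:\sup_N\|T_Nm\|<\infty\} = \bigcup_k\bigcap_N\{m:\|T_Nm\|\le k\}$ is a countable union of closed sets with empty interior, hence meager; on its (residual) complement the partial sums are unbounded and the Neumann series therefore diverges. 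This also finishes the ``only if'' direction of the main equivalence, since (a) supplies the ``if'' direction.

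The step I expect to require the most care is the spectral analysis in (b): one must ensure that the absence of a spectral gap really does produce a sequence $s_*\in\spec(\L\L^*)$ with $s_*\to 0$, which requires separating the pure point spectrum (absent on $(\Ker\L)^\perp$ by definition) from the continuous spectrum exactly as in the proof of Theorem~\ref{thm_LC}; once this is in hand, the Banach--Steinhaus conclusion is routine. A small but important bookkeeping point is that all operators in (a) are understood on $(\Ker\L)^\perp$, so the identification $\|T\|=1/\mu$ uses the spectral gap on that subspace, not the (potentially larger) spectrum on all of $\mathcal{H}_1$.
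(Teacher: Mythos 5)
Your proposal is correct, and part (b) runs on essentially the same engine as the paper: Banach--Steinhaus plus the blow-up of the scalar functions near the bottom of the spectrum (your evaluation of $s\phi_N(s)^2$ at small spectral points $s_*$ plays the role of the paper's evaluation $g_N(N^{-1/2})\sim\sqrt N$ for $g_N(\lambda)=\frac{1-(1-\gamma\lambda^2)^N}{\lambda}$), followed by the standard meager/residual dichotomy. Where you genuinely diverge is in the setup and in part (a). The paper passes through the polar decomposition $\L=U|\L|$, sets $m_*=U^*m$, and represents the partial sums as spectral integrals $\int g_N(\lambda)\,\d P_\lambda m_*$; the error in (a) is then the explicit integral $\int \lambda^{-2}(1-\gamma\lambda^2)^{2N}\d\mu_{m_*}$, which is what the authors later invoke to observe that the convergence rate with noisy data depends on the spectral measure of $m_*$ (and they also give a second, constructive proof of residuality in (b), exhibiting infinitely many mutually orthogonal divergent data $m_l$). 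You instead use the intertwining $K\L^*=\L^*(I-\gamma\L\L^*)$ to write $T_N=\L^*\phi_N(\L\L^*)$, avoiding the polar decomposition altogether, and in (a) you get the rate from the telescoping identity $T_N=T-K^NT$ with $T=(\L^*\L)^{-1}\L^*$; this is cleaner and more operator-algebraic, at the cost of only producing the worst-case uniform rate $(1-\nu)^N/\mu$ rather than the $m$-dependent refinement encoded in the paper's error formula. One point to keep explicit (you flag it, rightly) is that in $\|T-T_N\|\le\|K\|^N\|T\|$ the norm of $K$ must be taken on $(\Ker\L)^\perp$, where $\Ran T$ lies and which $K$ preserves; on all of $\mathcal{H}_1$ one only has $\|K\|\le 1$ when $\Ker\L\neq\{0\}$. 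Similarly, in (b) the needed accumulation of $\spec(\L\L^*)\setminus\{0\}$ at $0$ follows directly from Proposition~\ref{pr_1}(b) (failure of the gap) together with the fact that an isolated spectral point of a self-adjoint operator is an eigenvalue, so the appeal to separating pure point from continuous spectrum is lighter than you suggest.
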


\begin{proof}
Recall that every bounded operator $\L$ has a polar decomposition of the type  $\L= U|\L|$, see \cite[VI.4]{Reed-Simon1}, true also for bounded operators between different Hilbert spaces, where  $U$ is a partial isometry. We have  $\Ker U = \Ker \L$, $\Ran U= \overline{\Ran \L}$, 
and $U$ is an isometry on $(\Ker \L)^\perp$.  We can write \r{P0} as 
\be{P11}
(I - (I-\gamma \L^* \L))f = \gamma  |\L|U^* m.
\ee
For $U^*$, we have $\Ker U^*= (\Ran \L)^\perp$ and ${\Ran U^*} = (\Ker\L)^\perp$. Set $m_*= U^*m$. We have  $m_*\perp \Ker\L$. 
Then \r{P11} reduces to
\be{P12}
(I - (I-\gamma \L^* \L))f = \gamma  |\L|m_*.
\ee
For
\[
\tilde S_N m := \sum_{j=0}^{N-1} (I-\gamma \L^*\L)^j\gamma  |\L|m_*
\]
we now have 
\[
\begin{split}
\tilde S_N m &= \int_{\spec(|\L|)} \sum_{j=0}^{N-1} (I-\gamma \lambda^2)^j \gamma\lambda  \, \d P_\lambda m_*\\  
 &= \int_{\spec(|\L|)}   g_N(\lambda) \, \d P_\lambda\, m_*,\qquad  g_N(\lambda) :=  \frac{ 1-(1-\gamma \lambda^2)^N}\lambda.
\end{split}
\]
The function $g_N$ is smooth even near $\lambda=0$. Therefore, in the spectral representation,  $\tilde S_N= g_N(\lambda)$.

To prove (a), assume  that the problem is stable. Then $\L^*\L$ is invertible on  $(\Ker\L)^\perp$   by the spectral theorem. The function $1/\lambda$ 
is a bounded function on the spectrum, and it is the spectral representation of the inverse of $|\L|$ restricted to $(\Ker\L)^\perp$ that we will denote temporarily by $|\L_1|^{-1}$. Notice that $|\L_1|^{-1}U^* = \L_1^{-1}$, where $\L_1:(\Ker\L)^\perp\to \text{Ran}\,\L$ (the latter is closed) is  the restriction of $\L$ as in \r{PL}. Therefore, denoting by $m_1$ the orthogonal projection of $m$  to $(\Ker\L)^\perp$, we have 
$\L_1^{-1}m_1= |\L_1|^{-1}U^*m=|\L_1|^{-1}m_*$ which is the spectral representation is $\lambda^{-1}m_*$. 

Therefore,
\[
\tilde S_N m-\L_1^{-1}m_1 =  -\int_{\spec(|\L|)}   \frac{(1-\gamma \lambda^2)^N}\lambda \, \d P_\lambda\, m_*.
\]
Note that $f=\L_1^{-1}m_1$ is the unique solution of $\L^*\L f=\L^*m$ orthogonal to $\Ker\L$. Then
\be{Per}
\big\|\tilde S_N m-\L_1^{-1}m_1\big\|^2 =  \int_{\spec(|\L|)}   \frac{(1-\gamma \lambda^2)^{2N}}{\lambda^2} \, \d \mu_{m_*}.
\ee
The convergence then follows as in Theorem~\ref{thm_LC} because then the integrand is uniformly bounded on the spectrum.

Assume now  that the problem is unstable. 
Assume that $\tilde S_N m$ is  bounded (which would be true if it converges) for every $m_*\in (\Ker\L)^\perp$ with a bound that may depend on $m_*$. By the uniform boundedness theorem, the norms $\|\tilde S_N\|$ must be uniformly bounded, as well. Therefore, by \cite[Theorem~VII.1(g)]{Reed-Simon3}
\be{P13}
\|\tilde S_N\|=g_N(\lambda) \le C, \quad \forall N, \; \forall \lambda\in \spec(|\L|).
\ee
On the other hand, $g_N(N^{-1/2}) =\sqrt{N}(1-e^{-\gamma}+o(1))$, as $N\to\infty$.   
This contradicts \r{P13} since in the non stable case, $\spec(|\L|)$ contains a sequence of positive numbers converging to $0$; and in particular proves lack of uniform convergence.  We also proved that there cannot be strong convergence in this case. 
 
\begin{figure}[h!] 
  \centering
  \includegraphics[trim = 20mm 208mm 100mm 18mm, clip, scale=0.8]{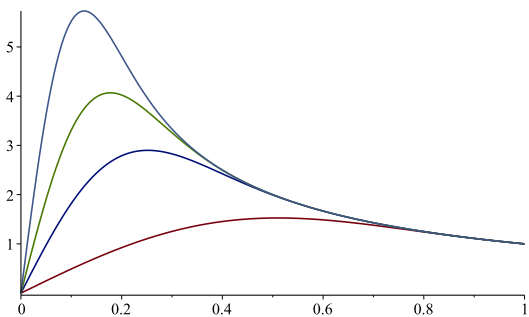}
\caption{\small The functions $g_N(\lambda)$ with $\gamma=1$ and $N=5, 20, 40, 80$. As $N$ increases, the maximum increases as $C_1\sqrt N$ and its location shifts to the left to $\lambda_N=C_2/\sqrt N$.}
\label{TAT_with_reflection_numerics_fig1}
\end{figure}

The proof is then completed by noticing that the set of $m$ for which the $\tilde S_N m$ is unbounded is known to be residual since $\|\tilde S_N\|$ is unbounded. Residual sets are called sometimes generic. 

We give a more direct proof of the last statement, revealing the structure of those $m$ for which the iterations diverge. We will show in particular, that the span of such $m$ is infinitely  dimensional. 
Notice first that  $\spec(\L^*\L)$ contains a sequence $\lambda_n\searrow 0$. 
Let $I_k=(\lambda_k,\lambda_{k+1}]$. By the standard proof that $\mathbf{N}\times\mathbf{N}$ is countable, where $\mathbf{N}=\{1,2,\dots\}$ (or by its conclusion), we can label $I_k$ and $\lambda_k$ as $I_{l,m}$ and $\lambda_{i,m}$ so that the map $k\to(l,m)$ is a bijection. For each $l$, $\lambda_{l,m}\to0$, as $m\to\infty$ because the latter is a subsequence of $\lambda_k$ with rearranged terms. Then for every $l$, on the range $\mathcal{H}_l$ of the spectral projection corresponding to $\cup_k I_{l,k}$, the series \r{S4} diverges for some $m$ so that $U^*m\in \mathcal{H}_l$. On the other hand, $U$ is unitary from $(\Ker \L)^\perp$ to  $\overline{\text{Ran}\,\L}$; therefore, $U^*: \overline{\text{Ran}\,\L} \to (\Ker\L)^\perp$ is unitary, as well. Since $\mathcal{H}_l\subset (\Ker\L)^\perp$, $U^*m\in \mathcal{H}_l$ identifies $m\in \overline{\text{Ran}\,\L}$ uniquely by applying the inverse of $U^*$. This proves the existence of $m=m_l$ which makes \r{S4} divergent, and those $m_l$'s are mutually orthogonal. They are mapped to a linearly independent set under $U^*$. 

Note that we actually proved something more: for every $l$, ``almost all'' elements $m$ in $\mathcal{H}_l$ force \r{S4} to diverge: only those with finitely many non-zero projections do not. On the other hand, there is an infinitely dimensional space of $m$ for which we have strong convergence, for example all $m$ with a spectral measure $\d\mu_{m_*}$ such that $1/\lambda$ is integrable. 
\end{proof}

\begin{remark}
Since 
\[
N^{-1/2}g(x/\sqrt{N})= \frac{1-(1-\gamma x^2/N)^N}{x} \sim \frac{1-e^{-\gamma x^2}}{x}, \quad \text{as $N\to\infty$}, 
\]
it follows that $\max_{0\le \lambda\le \sqrt{2/\gamma}} |g_N|\sim C\sqrt{N}$ attained at $\lambda\sim  C_1/\sqrt N$. 
\end{remark}

\begin{remark} 
It is worth noting that in the stable case (a), the Landweber solution $f=\L_1^{-1}m_1$ can also be described in the following way. The noisy data is first projected to  $\Ran\L$ (which is closed). Denote that projection by $m_1$. Then the exact solution is found as in the case of exact data: the solution of $\L f=m_1$ orthogonal to $\Ker\L$. This is called sometimes the Moore-Penrose inverse. 
 If there is no stability, $m$ is projected to the closure  of $\Ran\L$, because in $m_*=U^*m$ the result would be the same if we replace $m$ by $m_1$. Then we take  $m_*=U^*m_1$. Then \r{P11} is equivalent to $|\L|^2f= |\L|m_*$. This is an equation in $(\Ker\L)^\perp$, equivalent there to $|\L|f=m_*$. We have $m_*\in (\Ker|\L|)^\perp$ and the later is the closure of $\Ran|\L|$ but it is guaranteed to be in range if and only if that range is closed, i.e., if there is stability. 
\end{remark}

\begin{remark}
If $\mathcal{H}_1$ is finitely dimensional, then the problem is always stable. If we discretize a problem in an infinitely dimensional space, we get a finitely dimensional one. One may think that there is no convergence problem then. There are two potential problems with this argument however. First, the gap could be very small causing stability problems in numerical inversions. Second, the discretization could be a very poor approximation of the continuous problem near the bottom of the spectrum causing a very small gap (optimal $\mu$) even though the continuous model may have a not such a small one.  In our simulations, for example,  we get eigenvalues of order $10^{-15}$ while the ``significant ones'' are of order $1$. 
\end{remark}

\begin{remark}
The proof of Theorem~\ref{thm_noisy} reveals that the rate of convergence or possible convergence with noisy data depends on the spectral measure of $m^*$, which we defined as the orthogonal projection of $U^*m$ onto $(\Ker\L)^\perp$. 
\end{remark}

\subsection{Choosing a  different initial guess} We may think $f_0$ in \r{Land} as an initial guess. We may also think of $f_1= \gamma\L^*\L m$ as such. When  $\|K\|<1$, we have a sequence generated by a contraction map, and this sequence would converge regardless of the initial $f_0$. We then choose $f_0$ not necessarily zero and set, as before,
\[
f_k =  f_{k-1} - \gamma \mathcal{L}^\ast (\mathcal{L}f_{k-1} - m), \quad\quad k=1,2,\dots .
\]
Then the limit $f$, if exits,  must satisfy 
\[
f =  f - \gamma \mathcal{L}^\ast (\mathcal{L}f - m),
\]
which is equivalent to $\mathcal{L}^\ast \mathcal{L}f = \L^* m$, which is the starting point in Section~\ref{sec_L1}, see \r{P0}. If $f_0$ is already close enough to $f$, we would expect the iterations to converge faster. In fact, we could gain speed if $f-f_0$ has a better power spectrum w.r.t.\ $\L^*\L$, i.e., has a smaller spectral measure there. 

It is easy to show that 
\[
f_{N} = (1+K+\dots+K^{N-1})\gamma \L^*m+ K^Nf_0. 
\]
Therefore,
\[
\begin{split}
f_N-f_0&= (1+K+\dots+K^{N-1})\gamma \L^*m+ (K^N-I)f_0\\
  & =(1+K+\dots+K^{N-1})(\gamma \L^*m+(K-I)f_0) \\
  & =  (1+K+\dots+K^{N-1})\gamma \L^*(m- \L f_0).
\end{split}
\]
Therefore, the Landweber iterations in this case correspond to the ones we studied so far for the equation
\[
\L^*\L(f-f_0) = \L^* (m- \L f_0),
\]
i.e., $f$ is replaced with $f-f_0$ and $m$ is replaced by $m-\L f_0$. Even though the new equation is equivalent to the old one, if $f-f_0$ is small in a certain sense or it has a lower spectral measure near the origin, the series may converge faster. 
 
\section{Application to multiwave tomography with reflectors}\label{sec_TAT}

We apply the abstract theory in the previous section to  multiwave tomography with reflectors. To have $\L$ as a  bounded $L^2\to L^2$ map, we will restrict $\supp f$ to a compact subset of $\Omega$, and assume  convexity. The reason for that is to exclude possible singularities issued from $\supp f$ hitting $\bo$ tangentially. We also refer to \cite{Tataru_98} for a discussion of the mapping properties of a similar operator (the solution of the wave equation without reflections, i.e., propagating in $\R^n$) to $\R\times\bo$ in case of tangential rays.

First we derive the expression for  
\be{L*}
\mathcal{L}^\ast: L^2((0,T)\times\partial\Omega) \rightarrow  L^2(\Omega_0,c^{-2}\d x).
\ee
 We  know that $\L^*$ must exist as a bounded operator in the spaces indicated above, and  $C_0^\infty((0,T)\times\partial\Omega)$ is dense in $L^2((0,T)\times\partial\Omega)$; therefore $\L^*$ restricted to the latter space defines $\L^*$ uniquely.

\begin{theorem}
The adjoint operator of $\mathcal{L}$ defined in \eqref{L} is the closure of 
\begin{equation} \label{adjL}
\L^\sharp:  C_0^\infty((0,T) \times\partial\Omega) \rightarrow L^{2}(\Omega, c^{-2}\d x), \quad\quad\quad g\mapsto -\partial_t v|_{t=0,\, x\in\Omega_0},
\end{equation}
where $v$ is the solution of the following initial boundary value problem
\begin{equation} \label{adjBVP}
\left\{\begin{array}{rcll}
(\partial^2_t - c^2(x) \Delta) v = & 0 & \quad\quad \text{ in } (0,T)\times\Omega,  \\
\partial_\nu v |_{(0,T)\times\partial\Omega} = & g,  & \\
v|_{t=T} = & 0 ,& \\
\partial_t v|_{t=T} = & 0. &
\end{array} \right.
\end{equation}
\end{theorem}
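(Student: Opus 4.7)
The plan is to prove the pairing identity
$$\langle \mathcal{L}f,g\rangle_{L^2((0,T)\times\bo)}=\langle f,\L^\sharp g\rangle_{L^2(\Omega_0,c^{-2}\d x)}$$
for $f\in C_0^\infty(\Omega_0)$ and $g\in C_0^\infty((0,T)\times\bo)$ by applying a hyperbolic Green's identity to the pair $(u,v)$ solving \eqref{BVP} and \eqref{adjBVP}, and then extending by density. The key observation is that $\partial_t^2-c^2\Delta$ is formally self-adjoint with respect to the weighted measure $c^{-2}\,\d x\,\d t$, so that all the $c$-factors cancel cleanly against Lebesgue integration by parts.

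First I would fix smooth, compactly supported $f$ and $g$, and use the standard well-posedness for the Neumann problem \eqref{adjBVP} to produce $v$ that is regular enough on $[0,T]\times\bar\Omega$ to allow Green's formula. (Since $g\in C_0^\infty((0,T)\times\bo)$ vanishes near $t=T$, it meets any compatibility conditions with the zero final data.) Then I would start from the tautology
$$0=\int_0^T\!\!\int_\Omega v\,(\partial_t^2-c^2\Delta)u\; c^{-2}\,\d x\,\d t.$$
Integrating by parts twice in $t$, the time boundary terms at $t=T$ vanish because $v(T,\cdot)=\partial_t v(T,\cdot)=0$; at $t=0$, the term $v(0)\partial_t u(0)$ vanishes because $\partial_t u|_{t=0}=0$, while $\partial_t v(0)\,u(0)$ contributes $\int_\Omega \partial_t v|_{t=0}\,f\;c^{-2}\,\d x$. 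Applying Green's identity in the spatial integrals, the factor $c^{-2}$ cancels against $c^2\Delta$, and the boundary contribution reduces via $\partial_\nu u=0$ and $\partial_\nu v=g$ to $-\int_0^T\!\int_{\bo} u\,g\,\d S\,\d t$. Since $v$ itself solves the homogeneous wave equation, the remaining interior term against $u$ drops out, leaving
$$0=\int_{\Omega_0} f\,\partial_t v|_{t=0}\,c^{-2}\,\d x\;+\;\int_0^T\!\!\int_{\bo} u\,g\,\d S\,\d t,$$
which is precisely the desired adjoint relation with $\L^\sharp g=-\partial_t v|_{t=0,\,x\in\Omega_0}$. The restriction to $\Omega_0$ is harmless because $f$ is paired only over $\supp f\subset\Omega_0$.

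Finally, the density argument: $C_0^\infty((0,T)\times\bo)$ is dense in $L^2((0,T)\times\bo)$, the operator $\L$ is bounded by Theorem~\ref{thm_st}, hence so is $\L^*$, and a bounded operator is determined by its values on a dense set. Therefore $\L^*$ equals the (bounded) closure of $\L^\sharp$, which is the statement of the theorem.

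The main obstacle is justifying the integrations by parts rigorously when $g$ is only $L^2$. I would sidestep this by working throughout with smooth compactly supported test functions—where classical existence and $C^\infty$ regularity of $v$ up to the boundary make every step elementary—and only invoke density at the very end. (Alternatively, the hidden regularity of solutions of the wave equation with $L^2$ Neumann data, in the spirit of Lasiecka--Lions--Triggiani, would allow the argument to run directly for $g\in L^2$, but this is not needed for the bounded-extension argument used here.)
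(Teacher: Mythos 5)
Your proposal is correct and follows essentially the same route as the paper: the same duality pairing for smooth $f\in C_0^\infty(\Omega_0)$, $g\in C_0^\infty((0,T)\times\bo)$, obtained by integrating by parts twice in $t$ and applying Green's formula in $x$ (your starting identity pairs the equation for $u$ against $v$ with the weight $c^{-2}$, the paper pairs the equation for $v$ against $u$, which is the same computation), followed by the identical density/boundedness argument to identify $\L^*$ with the closure of $\L^\sharp$. No gaps worth noting.
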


\begin{proof}
First we assume  $f\in C_0^\infty(\Omega_0)$ in \eqref{BVP} and $g\in C_0^\infty((0,T)\times\partial\Omega)$ in \eqref{adjBVP}. Note that \r{adjBVP} is uniquely solvable, and one way to construct a solution is to take a smooth $v_0$ satisfying the boundary condition in \eqref{adjBVP}, supported in $(0,T)\times\bar\Omega$ and set $v=v_0+v_1$, where $\Box v_1 = -\Box v_0$, and $v_1$ has zero Cauchy data on $t=T$ and satisfies the homogeneous Neumann boundary condition. Here, $\Box$ is the wave operator and the source problem for $v_1$ can be solved by Duhamel's principle using the well posedness of the boundary value problem for the Neumann problem with Cauchy data in $H^1(\Omega)\times L^2(\Omega)$, see e.g., \cite{Goldstein2003} or \cite{St-Y-AveragedTR} in this context. In \cite{St-Y-AveragedTR}, in the energy space, $H^1$ is replaced by the same space modulo constants (the kernel of the Neumann realization of $-c^2\Delta$) but for Cauchy data $(C,0)$ with $C$ constant, the solution is trivially $u=C$.

Let $v$ be a solution of \eqref{adjBVP}, and let $u$ solve \r{BVP}. Then
\begin{equation} \label{zero}
0=\int_\Omega \int^T_0 \left[ \frac{1}{c^2(x)}\partial^2_t v - \Delta v \right] u \,dtdx = \int_\Omega \frac{1}{c^2(x)} \int^T_0 \partial^2_t v u \,dtdx - \int_\Omega \int^T_0 \Delta v u \,dtdx.
\end{equation}
For the first term in \eqref{zero}, integration by parts in $t$ twice gives
\begin{align*}
\int_\Omega \frac{1}{c^2(x)} \int^T_0 \partial^2_t v u \,\d t\, \d x = & \int_\Omega \frac{1}{c^2(x)} \left[  \partial_t v u \Big\vert^T_0 - \int^T_0 \partial_t v \partial_t u \,\d t \right] \d x \\
 = & -\int_\Omega \frac{1}{c^2(x)} (\partial_t v|_{t=0}) f \,\d x - \int_\Omega \frac{1}{c^2(x)} \int^T_0 \partial_t v \partial_t u  \,\d t\, \d x\\
 = & -\int_\Omega \frac{1}{c^2(x)} (\partial_t v|_{t=0}) f \,\d x - \int_\Omega \frac{1}{c^2(x)} \left[  v \partial_t u \Big\vert^T_0 - \int^T_0 v \partial^2_t u \,\d t \right] \d x \\
 = & -\int_\Omega \frac{1}{c^2(x)} (\partial_t v|_{t=0}) f \,\d x + \int_\Omega \frac{1}{c^2(x)} \int^T_0 v \partial^2_t u  \,\d t\, \d x,
\end{align*}
where we have used that $u|_{t=0}=f$, $\partial_t  u|_{t=0}=0$ and $v|_{t=T} = \partial_t v|_{t=T}=0$.
For the second term in \eqref{zero}, applying Green's formula yields
$$
\int_\Omega \int^T_0 (\Delta v) u  \,\d t\, \d x = \int_\Omega \int^T_0 v \Delta u  \,\d t\, \d x + \int^T_0 \int_{\partial\Omega} g u \,\d S(x)\,\d t - \int^T_0 \int_{\partial\Omega} v \partial_\nu u \,\d S(x)\, \d t.
$$
Notice that the last term on the right-hand side actually vanishes as $\partial_\nu u=0$. Inserting these expressions into \eqref{zero} and observing that $\partial^2_t u = c^2(x)\Delta u$ we have
\begin{align*}
0 = & -\int_{\Omega_0} {c^{-2}(x)} (\partial_t v|_{t=0}) f \,\d x - \int^T_0 \int_{\partial\Omega} g u \,\d S(x)\d t \\
 = & \int_{\Omega_0} (\L^\sharp g)  f  c^{-2}\d x - \int^T_0 \int_{\partial\Omega} g \, \mathcal{L}f \,\d S(x)\,\d t,
\end{align*}
which justifies \eqref{adjL}. Since $C^\infty_0(\Omega_0)$ and $C_0^\infty((0,,T)\times\partial\Omega)$ are dense in $L^2(\Omega_0, c^{-2}\d x)$ and $L^2((0,T)\times\partial\Omega)$, respectively, the closure of $\L^\sharp$  is indeed the adjoint operator of $\mathcal{L}$.
\end{proof}

\begin{remark}
The proof of the theorem reveals the following. Note first that if $g$ is smooth but not $C_0^\infty$ (more specifically, not vanishing at $t=T$), the compatibility condition for the first derivatives of $v$ is violated at $t=T$ and $x\in\bo$ since $v|_{t=T}=0$ implies $\partial_\nu v|_{t=T}=0$ as well, which requires $g|_{t=T}=0$. In that case, there is no $C^1$ solution in the closed cylinder; and for a $C^2$ solution, we need to satisfy even one more compatibility condition: $\partial_\nu g|_{t=T}=0$.  We are interested in $\L^*\L$, and we cannot hope to have $g|_{t=T}=g_t|_{t=T}=0$ for $g=\L f$. Therefore, when computing $\L^*\L f$, we (almost) always must compute a weak solution of \r{adjBVP} and therefore deal with $g$ violating the compatibility conditions. 

The way to overcome that difficulty is to notice that we are constructing an $L^2\to L^2$ operator, see \r{L*}. Let $\chi_\eps$ be the characteristic function of $[0,T-\eps]$, with a fixed $\eps>0$. Then $\chi_\eps g\to g$ in $L^2$ for any $g\in L^2$ but the rate of convergence is $g$-dependent (for the multiplication operator, we have $\chi_\eps\to \Id$ strongly only). Therefore, if we replace $g$ by $\chi_\eps g$ and solve the resulting problem (the jump of $\chi_\eps g$ at $t=T-\eps$ is not a problem), we would get a good approximation for $0<\eps\ll1$. On the other hand, in the iterations above, we apply $\L^*$ to $g=\L f$ for a sequence of  $g$'s. One could use the results in \cite{BardosLR_control} to investigate whether $\chi_\eps g\to g$ uniformly in $g$ for $g$ in the range of $\L$ for $T>0$ satisfying the stability condition but we will not do this. Another way is just to replace $\L$ by $\chi_\eps \L$, and therefore, $\L^*$ will also be replaced by $\L^*\chi_\eps$. In fact, we could put any real valued $L^\infty$ function there, as long as it has a positive lower bound in a set on $(0,T)\times\bo$ which set set  satisfy the stability condition. 
\end{remark}

\begin{corollary} 
The adjoint $\L_\Gamma^*$ to $\L_\Gamma$ defined in \r{LG} is $E\L^*$, where $E$ is the operator of extension as zero of functions defined on $(0,T)\times\Gamma$ to $(0,T)\times\bo$. 
\end{corollary}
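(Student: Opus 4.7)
The plan is to view $\L_\Gamma$ as the composition of the full measurement operator $\L$ with a restriction operator, and then use the fact that the adjoint of restriction is extension by zero. Concretely, let $R : L^2((0,T)\times\bo) \to L^2((0,T)\times\Gamma)$ be the restriction map. Since $\L_\Gamma f = u|_{(0,T)\times\Gamma} = R(u|_{(0,T)\times\bo}) = R\L f$, we have the factorization $\L_\Gamma = R\L$, and therefore $\L_\Gamma^* = \L^* R^*$ (as bounded operators between the appropriate $L^2$ spaces).

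The key step is then to verify that $R^* = E$, i.e., that the $L^2$-adjoint of the restriction operator is the extension-by-zero operator. For $g \in L^2((0,T)\times\bo)$ and $h \in L^2((0,T)\times\Gamma)$, one checks
\[
\langle Rg, h\rangle_{L^2((0,T)\times\Gamma)} = \int_{(0,T)\times\Gamma} g\, h \, \d S\, \d t = \int_{(0,T)\times\bo} g\, (Eh) \, \d S\, \d t = \langle g, Eh\rangle_{L^2((0,T)\times\bo)},
\]
where the middle equality uses the definition of $E$ (which is zero off $\Gamma$). This identifies $R^* = E$, and combining with the factorization yields $\L_\Gamma^* = \L^* E$, which is exactly the content of the corollary (with the understanding that $E\L^*$ in the statement means ``first extend by zero, then apply $\L^*$'').

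There is essentially no obstacle here; the only subtlety worth noting is the space on which $\L^*$ should be applied. Since $Eh$ has a jump at $\partial\Gamma$ but still lies in $L^2((0,T)\times\bo)$, and $\L^*$ has been constructed as a bounded operator on the full $L^2$ space (via the closure argument in the preceding theorem, which accommodates boundary data violating compatibility conditions), applying $\L^*$ to $Eh$ is justified without regularity issues. Thus the corollary follows immediately from the factorization $\L_\Gamma = R\L$ and the general Hilbert-space identity $R^* = E$.
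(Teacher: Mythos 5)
Your proof is correct and follows exactly the paper's argument: factor $\L_\Gamma = R\L$ and use $R^*=E$, so $\L_\Gamma^* = \L^* E$ (the paper's ``$E\L^*$'' being a loose way of writing first extend by zero, then apply $\L^*$, as you correctly note). The only difference is that you spell out the elementary verification of $R^*=E$, which the paper takes for granted.
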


\begin{proof}
We have $\L_\Gamma= R\L$, where $R$ is the restriction to $(0,T)\times\Gamma$. Since $R^*=E$, this completes the proof. 
\end{proof}

\section{Numerical examples}\label{sec_numeric}
\subsection{A stable example: variable speed, whole boundary.
}
\subsubsection{Exact data}  \label{sec_5.1.1}
We start with the Shepp-Logan (SL) phantom with a variable speed $c= 1+0.3\sin(\pi x)+0.2\cos(\pi y)$ in the box $[-1,1]^2$. The speed takes values between $0.5$ and $1.5$. We take $T=4$ and observations on the whole boundary. That time is large enough to guarantee stability. 

We compare the Averaged Time Reversal method \cite{St-Y-AveragedTR} with the Landweber method. We use a standard second order finite difference scheme on an $N\!\times\!N$ grid, and we restrict the image to a smaller square by multiplying by zero in the frame staying at $3\%$ from the border. This does not affect the SL phantom but it affects $\L^*$ (and $\L$) in the iterations; we have $\chi \L^*\L\chi$ instead of $\L^*\L$ in the iterations, where $\chi$ is the cutoff function. This does not eliminate completely geodesics through $\supp f$ tangent to the boundary if $c$ is not constant but by \cite{BardosLR_control}, the stability is still preserved. 

The errors are presented in Figure~\ref{fig_errors_10_30_50}. The thinner curves in the top part are the errors on a $\log_{10}$ scale for different choices of the parameter $\gamma$. Since we use the step $\d t = \d x/(\sqrt2 \max(c))$, $\d x= \d y=2/(N-1)$ in the numerical implementation, this effectively rescales the $t$-axis by the coefficient $1.5\sqrt2$, and therefore, changes the $L^2$ norm on $[0,T]\times\bo$. The choice of $\gamma$ depends on that factor, as well.

We see in Figure~\ref{fig_errors_10_30_50} that the optimal $\gamma$ seems to be  $\gamma\approx 0.055$, based on $50$ iterations. The convergence is highly sensitive to increasing $\gamma$ beyond this level. On the other hand, decreasing $\gamma$ reduces the accuracy but this happens much slower. Choosing the optimal $\gamma$ is based on trials and errors in this case. In Figure~\ref{fig_errors_10_30_50}, we show the errors vs.\ $\gamma$ for $10$, $30$ and $50$ iterations. The errors are consistent with the theoretical analysis in Section~\ref{sec_exact}. The error curves for each fixed number of steps ($10$, $30$ or $50$) have shapes similar to that in Figure~\ref{fig_theoretical_error}. Note that to compare Figure~\ref{fig_errors_10_30_50}, and Figure~\ref{fig_theoretical_error}, one has to divide the values in the former graphs by the number of the steps. 

The theoretical analysis of the uniform convergence rate illustrated in  Figure~\ref{fig_theoretical_error}, allows us to estimate $\mu^2$ roughly. The slopes of the curves in Figure~\ref{fig_errors_10_30_50} up to the optimal $\gamma$, normalized to one step, give a slope in the range $[1,2]$. Note that it is closer to $2$ for $N=10$, and close to $1$ for $N=50$. Converting to natural log, we get a rough estimate of $\mu^2$ in $[2.3, 4.6]$. Estimating $\|L\|$ is not practical from that graph.

\begin{figure}[h!] 
  \centering
  \includegraphics[trim = 3mm 35mm 0mm 25mm, clip, scale=0.8]{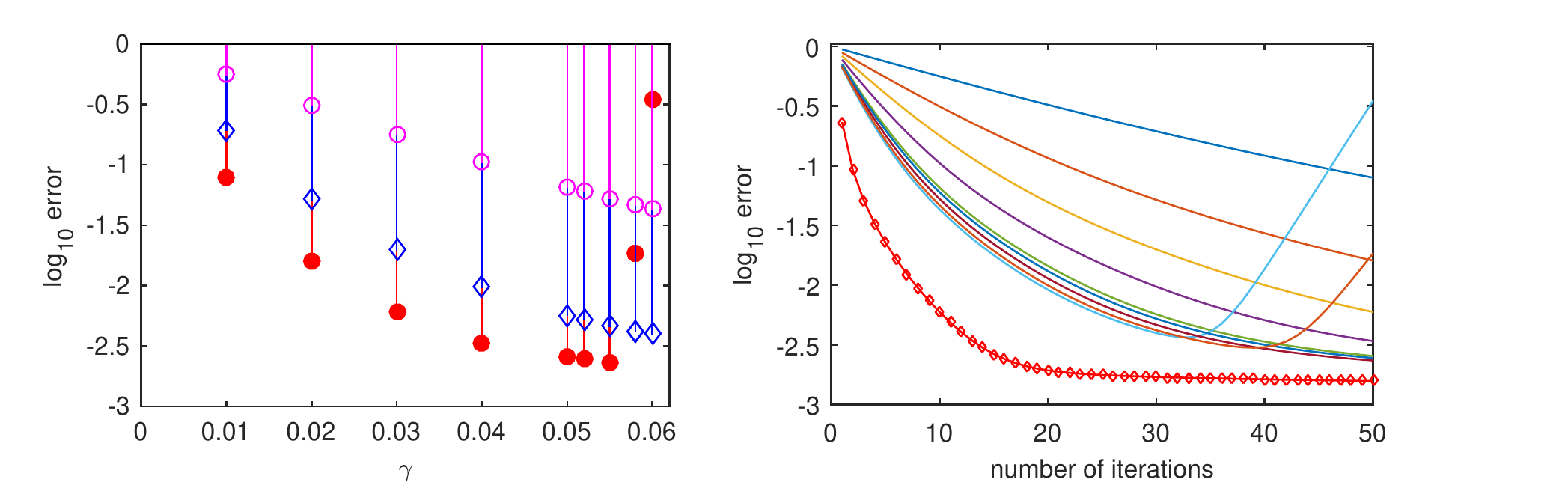}
\caption{\small 
Error vs.\ $\gamma$ for the Landweber iterations in the stable example. 
Plotted are errors after $10$ iterations (boxes), after $30$ iterations (diamonds) and after $50$ ones (dots). Right:  Error vs.\ the number of the iterations. The bottom curve with the circles are the ATR errors. The other curves correspond to $\gamma$ ranging from $0.1$ to  $0.06$, as on the left, starting form the top to the bottom in the top left corner. }
\label{fig_errors_10_30_50}
\end{figure}

We computed the matrix $L$ of $\L$ in its discrete representation for $N=101$, and $\Omega_0$ being an interior $94\times 94$ square. We halved $N$ because computing $L$ and its spectrum is very expensive for $N=201$. The errors in that case are similar to what we presented for $N=101$. 
We computed $L$ on that grid. The eigenvalues of $L^*L$ are plotted in Figure~\ref{fig_spectral_decomposition}, the smooth curve with the vertical axis on the left. 

 \begin{figure}[h] 
  \centering
  \includegraphics[trim = 10mm 0mm 0mm 40mm, clip, scale=0.6]{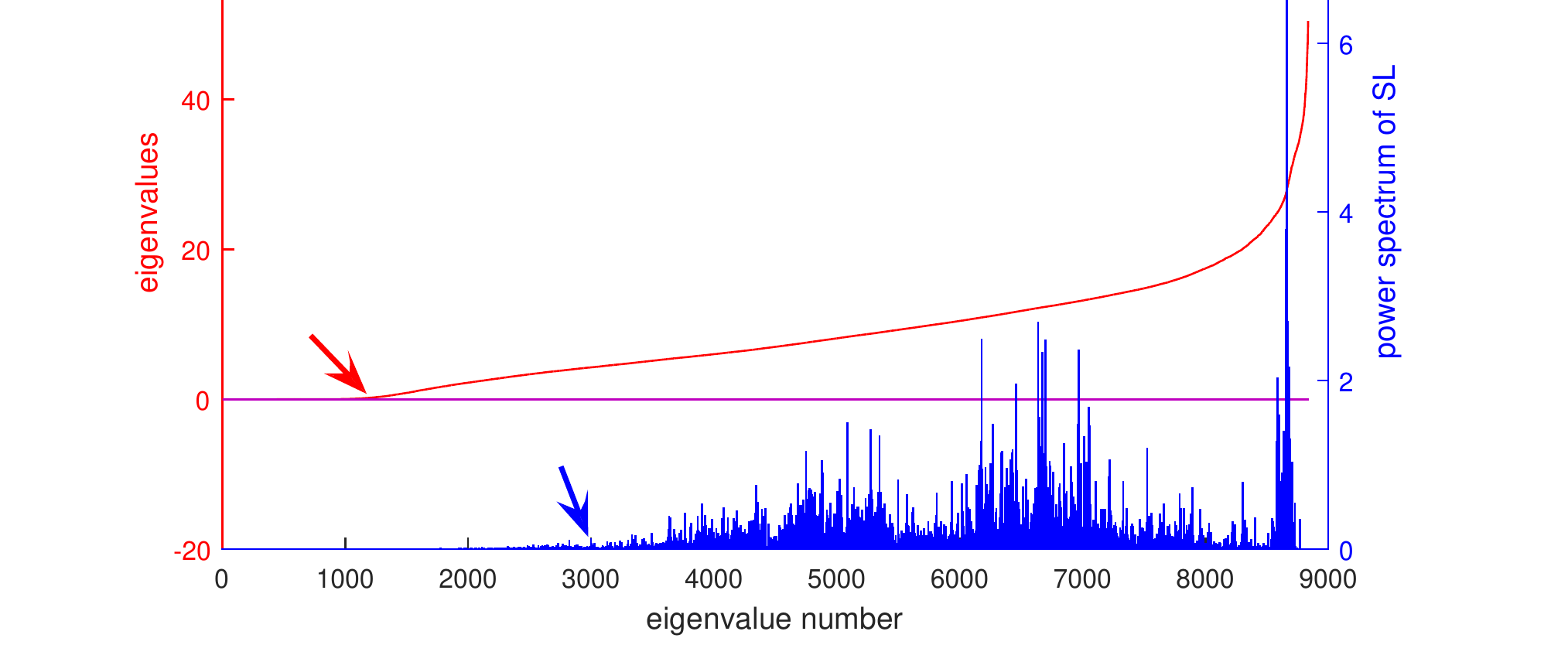}
\caption{\small A stable case. The smooth increasing curve represents the eigenvalues of $\L^*\L$ in the discrete realization on an $101\!\times\!101$ grid restricted to a $94\!\times\!94$ grid. There are small imaginary parts along the horizontal line in the middle. 
$\L^*$ is computed as the adjoint to the matrix representation of $\L$. The rough curve represents the squares of the Fourier coefficients of the SL phantom.  }
\label{fig_spectral_decomposition}
\end{figure}

We computed  the numerical representation $L_{\rm w}^*$ of $\L^*$ using the wave solver to solve \r{adjL} as well. The actual numerical inversion uses $L_{\rm w}^*L$, not $L^*L$. The spectral representation looks a bit different then but the locations of the lower part of the spectrum and the power spectrum of the SL phantom remain almost the same. Some of the eigenvalues  have imaginary parts small relative to their real ones. 

The largest computed eigenvalue of $L^*L$ is  $\lambda_{8836}\approx 50.5$, and the smallest 30 are of order $10^{-15}$, which can be thought of as $0$ computed with a double numerical precision.  We also get $\lambda_{1000}\approx 0.036$. So for all practical purposes, $L_{\rm w}^*L$ has a kernel of dimension at least $1,000$! This seems to contradict the fact that $\L$ is stable. The computed eigenfunctions of $L^*L$ (not $L_{\rm w}^*L$) are shown in Figure~\ref{fig_eigenfunctions_1_1000_7500}. The eigenfunction problem is unstable but for our purposes, it is enough to know that there is a certain approximate orthonormal (enough) base so that on the first, say $1,000$ eigenfunctions, $L$ has a norm of several orders of magnitude smaller than $\|L(SL)\|/\|SL\|$, where SL is the SL phantom. 
We verified numerically that $L$    acting on those computed eigenfunctions have  small norms, as the small eigenvalues suggest, and that the basis is orthonormal to a good precision. We also generated  linear combinations of the first $1,000$ eigenfunctions with random coefficients and computed the norm of $L$ on them. The relative norm of the SL phantom on that space is around $0.0016$, which means that the total spectral measure there is the square of that, i.e., approximately $2.42\times 10^{-6}$. 
 Also, using so generated phantoms, the iterations break. 

It turns out that the eigenfunctions corresponding to the extremely small eigenvalues have sharp jumps from minimal to maximal values from cell to cell like a chess board. The plotted $1,000$-th eigenfunction has mostly high frequency content as well, see  Figure~\ref{fig_eigenfunctions_1_1000_7500}.  The $7,500$-th one has a much lower high frequency content. This suggests that the extremely small eigenvalues are due to the poor accuracy of the wave equation solver for frequencies close to Nyquist, see also section~\ref{sec_grid}. This can also be confirmed by using semiclassical methods. The operator $\L^*\L-z$ for $0<z\ll1$ is elliptic because $z$ is separated from the range of the (elliptic) principal symbol. Therefore,  possible eigenfunctions with eigenvalues $z$   should be smooth and should also  have low oscillations. 
 \begin{figure}[h!] 
  \centering
  \includegraphics[trim = 40mm 35mm 0mm 25mm, clip, scale=0.8
  ]{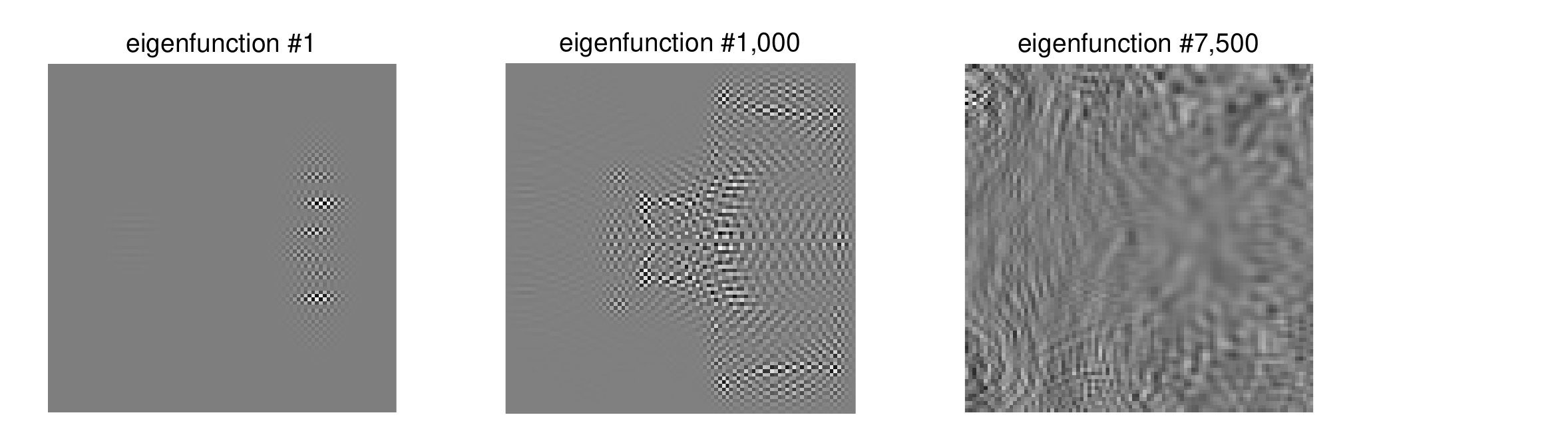}
\caption{\small The eigenfunctions of $L^*L$ corresponding to $\lambda_1$, $\lambda_{1,000}$ and $\lambda_{7,500}$ }
\label{fig_eigenfunctions_1_1000_7500}
\end{figure}
Therefore, the spectrum of the discrete $L_{\rm w}^*L$ contains small eigenvalues not approximating the spectrum of $\L^*\L$. 

Why do the iterations still work, and we are getting a lower bound of the spectrum $\mu^2$ in $[2.3, 4.6]$ (based on one $f$, indeed but we tried a few other ones)? It turns out that our $f$ (the SL phantom) does not have much spectral content in the bottom of the spectrum. In Figure~\ref{fig_spectral_decomposition}, we plot the computed  squares of the Fourier coefficients (the power spectrum) of the SL phantom with respect to the eigenfunction basis of $L_{\rm w}^*L$, see the rough curve there and the vertical axis on the right. It turns out that the SL phantom has very small Fourier coefficients until, say, $\lambda_{3,000}\approx 4.26$. 
 We can take this as an effective value for $\mu^2$ since the analysis above says that the bottom on the spectrum should be taken on the support of the spectral measure of $f$, see \r{P8}.
 This correlates well with our estimate  $\mu^2\in[2.3,4.6]$ based on Figure~\ref{fig_errors_10_30_50}. In the top of the spectrum, there is a similar but a smaller gap with the highest significant eigenvalue $\lambda_{8,718}\approx31.84$. Therefore,  $\|L\|^2\approx 31.84$ on the support of the spectral measure of the SL phantom is a good estimate. 
That gives us, roughly speaking, $\gamma^*=0.055$, which is close to our numerical results, see 
  Figure~\ref{fig_errors_10_30_50}. We recall that the computed errors are based on inversion with $L_{\rm w}^*L$ (even though we do not compute the matrices $L_{\rm w}^*$ and $L$ in the inversion) while the spectral representation is done based on $L^*L$.

It is well known that finite difference schemes for differential operators require a priori boundedness of the higher order derivatives, therefore the conclusions above should not be surprising. High discrete frequencies propagate with slower speeds and the numerical solution is a good approximation only at low frequencies, see e.g., \cite{Zuazua-numerics, Albin2012}. 
Also, the discretization of the phantom matters. We render the SL phantom  on a higher dimensional grid first, then  resample it to the grid we work with. This way, we have a better sampling of the continuous function the SL phantom represents. This does not affect the rates of convergence much, and without that,  we still have a very low spectral content for very small eigenvalues, even though it is not that low. The original and the reconstructions however do not show the typical aliasing artifacts like staircase edges, etc., and a still visibly ``sharp'' enough.

\subsubsection{Data not in the range} In the example above, we add the data on one of the sides to another one. The difference, and hence the new data is not in the range by unique continuation. 
The reconstruction shows obvious artifacts as one would expect. The convergence behavior as a function of $\gamma$ however, does not change in a radical way, see Figure~\ref{errors_stable_notinrange}. 
 \begin{figure}[h!] 
  \centering
  \includegraphics[trim = 0mm 5mm 0mm 0mm, clip, scale=0.5]{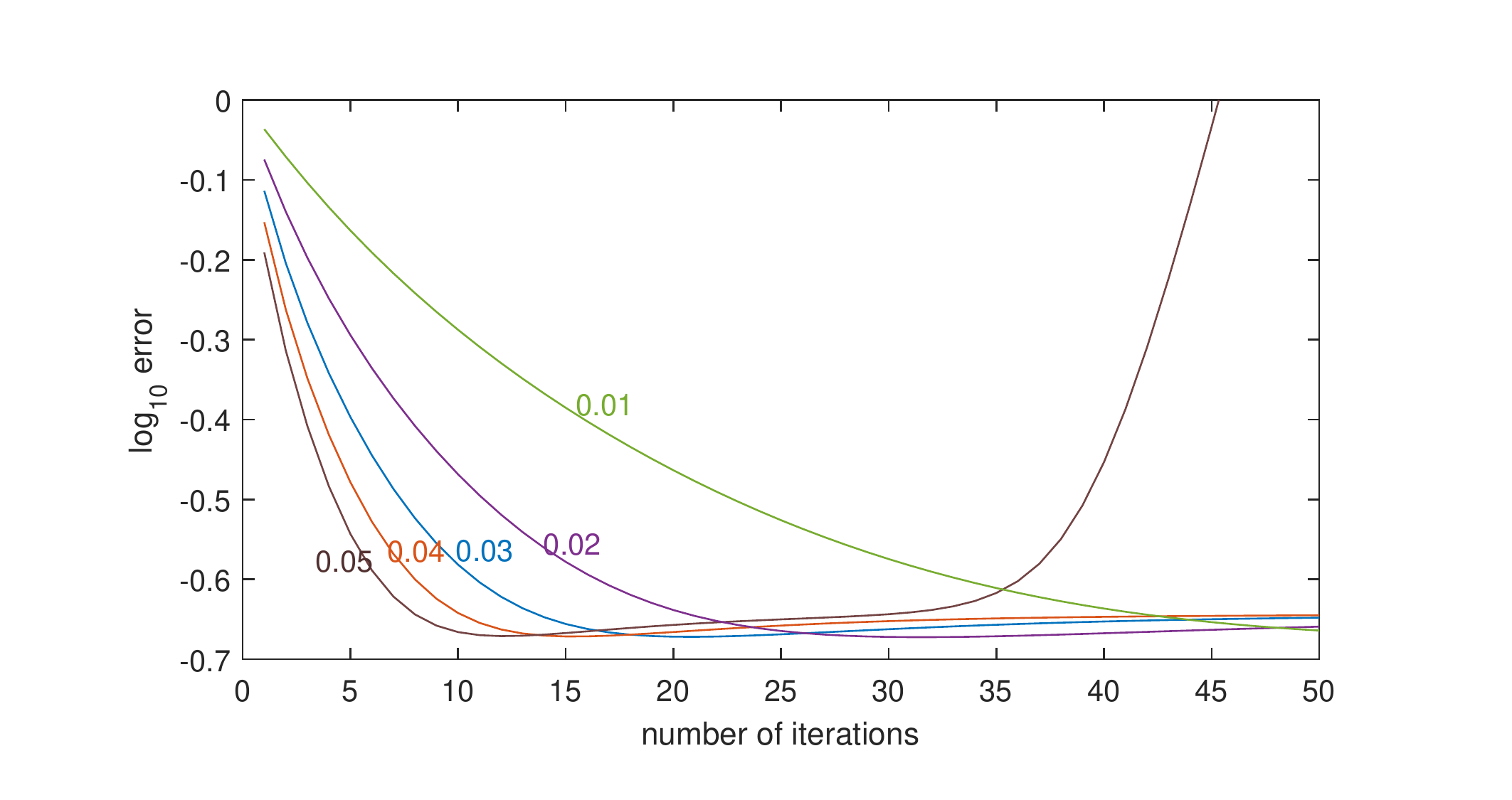}
\caption{\small Error plots in the stable case with data not in the range. Approximate convergence.   }
\label{errors_stable_notinrange}
\end{figure}  
In particular, the iterations start to diverge for $\gamma$ between $0.04$ and $0.05$.  
As we saw in Theorem~\ref{thm_noisy}, the iterations would converge to $f+\tilde f$, where $f$ is the original $f$, and $\tilde f$ is the unique solution of $\L f_1=\widetilde{\delta m}$, where $\delta m$ is the ``noise'', and $\widetilde{\delta m}$ is its orthogonal projection to $\Ran\L$. 
The error curves are consistent with the distance from $ f$ to the iterations $f_n$ converging exponentially fast to $f+\tilde f$. They have minima corresponding to the closest element of $f_n$ to $ f$.

\subsubsection{The stable case with noise} We add Gaussian noise  with  standard deviation $0.1$. The data $Lf$ ranges in the interval $[-1.2,1.7]$. The noise curves are shown in Figure~\ref{errors_stable_noise}. In the continuous model, see Theorem~\ref{thm_noisy}, the iterations would converge at a rate depending on the spectral measure of the noise; which could be very slow. They converge to $f+\tilde f$ as above.  In the numerical case however, the problem is actually unstable, as explained above. The data now has a small but not negligible spectral part in the lower part of the spectrum due to the nature of the noise. This is responsible for a very slow divergence. Note that when the ``noise'' has a different spectral character, as in Figure~\ref{errors_stable_notinrange}, we are closer to convergence.

 \begin{figure}[h] 
  \centering
  \includegraphics[trim = 0mm 25mm 0mm 10mm, clip, scale=0.5]{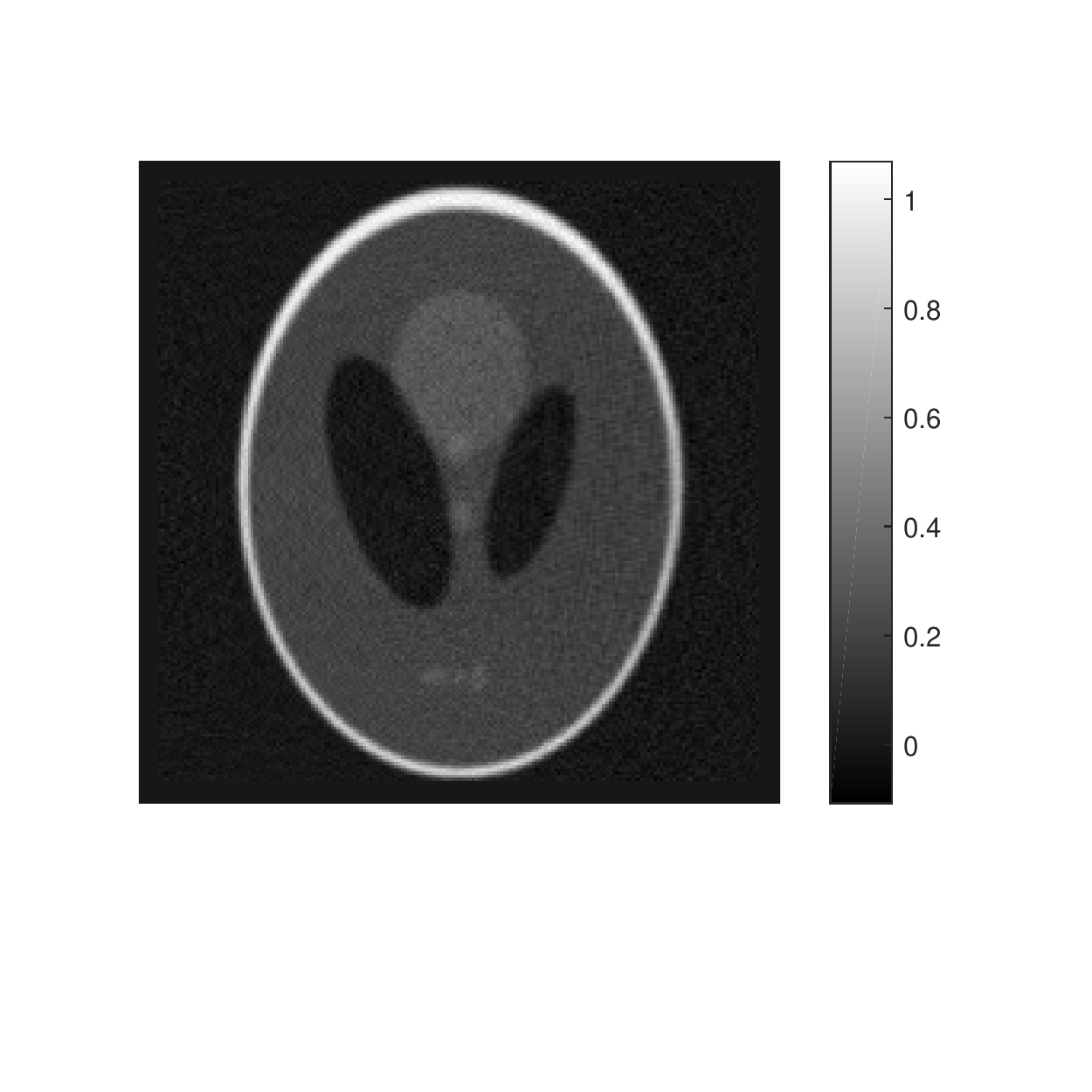}
  \includegraphics[trim = 0mm 10mm 0mm 10mm, clip, scale=0.42]{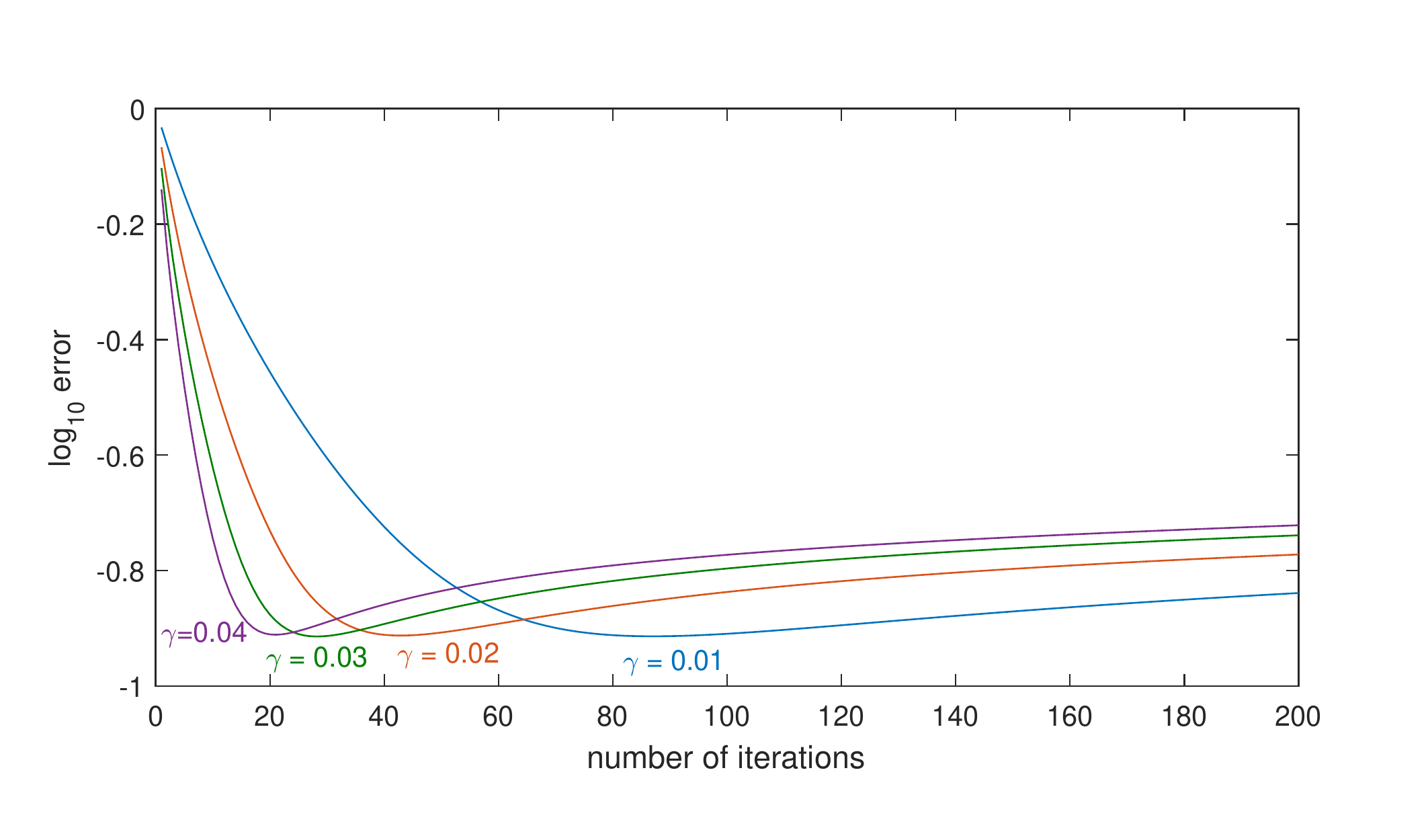}
\caption{\small Reconstruction and error plots in the stable case with noise with $200$ iterations; $\gamma=0.01,0.02,0.03,0.04$.  A slow divergence.  For $\gamma=0.05$ we get a fast divergence.}
\label{errors_stable_noise}
\end{figure}

The ATR method with this example is more sensitive to noise and gives significantly worse errors and images.


\subsubsection{Conclusions in the stable case} 
We summarize the conclusions in this section in the following. We want to emphasize again that some of them, like the first one are well known. 
\begin{itemize}
	\item The finite difference solver for the wave equation is a poor approximation of the continuous model for (discrete) frequencies close to Nyquist. We refer to  section~\ref{sec_grid}  
for a further discussion. 
	 Even the sampling of the phantom could be a poor approximation (aliased). 
	\item The discrete realization $L$ of $\L$ is unstable even if $\L$ is stable. This is not a problem with ``generic'' phantoms because they have a very small spectral content at the bottom of the spectrum of $L^*L$ which corresponds to  asmall hight frequency content (w.r.t.\ the discrete Fourier transform). 
	\item The discrete realization $L^*_{\rm w}$ of $\L^*$ used in the inversion is based on back-projection at each step and it is not the same as $L^*$. The differences do not affect the inversions visibly but create small but measurable imaginary parts of the eigenvalues of $L^*_{\rm w}L$. Most of the differences are related to modes with frequencies close to Nyquist. 
	\item Despite this, the inversion with ``generic'' phantoms works as predicted by the theory of the continuous model because they have small projections to the eigenfunctions of $L^*L$ with small eigenvalues (the unstable subspace).  If we base the theory on the discrete model, we would have to conclude that the problem is very unstable. 
	\item In case of noise, the iterations converge  (to something different than $f$) if the noise does not have low spectral content (corresponding to higher frequencies in the discrete Fourier transform). When the noise has a significant low  spectral content (high frequencies), the inherent instability of the discretization causes a slow divergence. 
\end{itemize}

\subsection{A unstable example; partial boundary data}
We take a constant speed $c=1$ so that we can compute easily the invisible singularities. We use data on a part of the boundary as indicated in Figure~\ref{3images}: the bottom and the left-hand side plus 20\% of the adjacent sides.  We take $T=1.8$ which is smaller than the side of the box $\Omega$, equal to $2$. We have uniqueness since $T_0=1.8$ in this case. For stability however, we need $T>2$. Singularities from some neighborhood of the top of the SL phantom traveling vertically and close to it, do not reach $\Gamma$ for time $T=1.8$. Therefore, such edges cannot be reconstructed stably. Our goal is to test the convergence of both methods and the dependence on $\gamma$ in the Landweber one.

 \begin{figure}[h!] 
  \centering
  \includegraphics[trim = 0mm 25mm 0mm 10mm, clip, scale=0.42
  ]{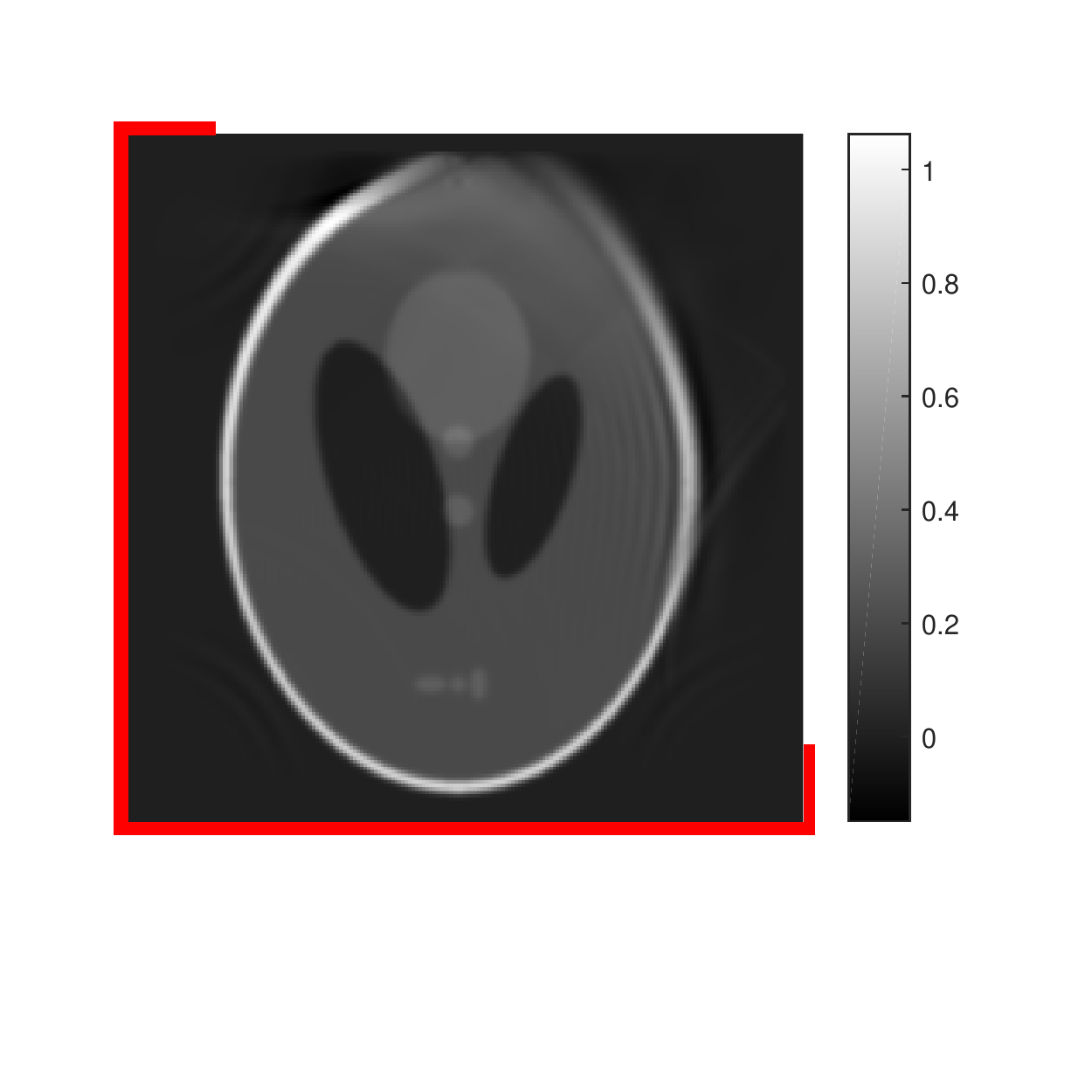}
    \includegraphics[trim = 0mm 25mm 0mm 10mm, clip, scale=0.42
  ]{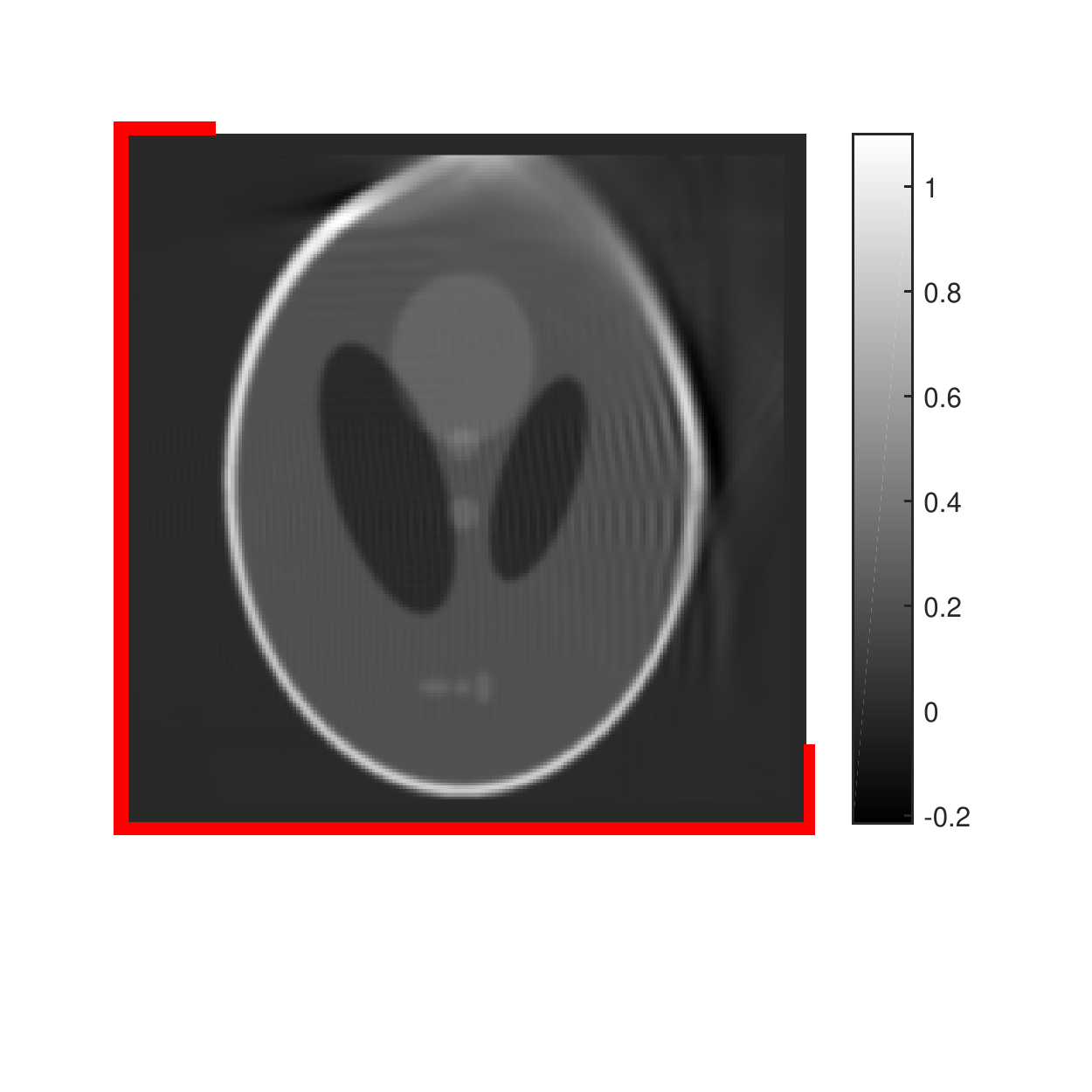}
      \includegraphics[trim = 0mm 25mm 0mm 10mm, clip, scale=0.42
  ]{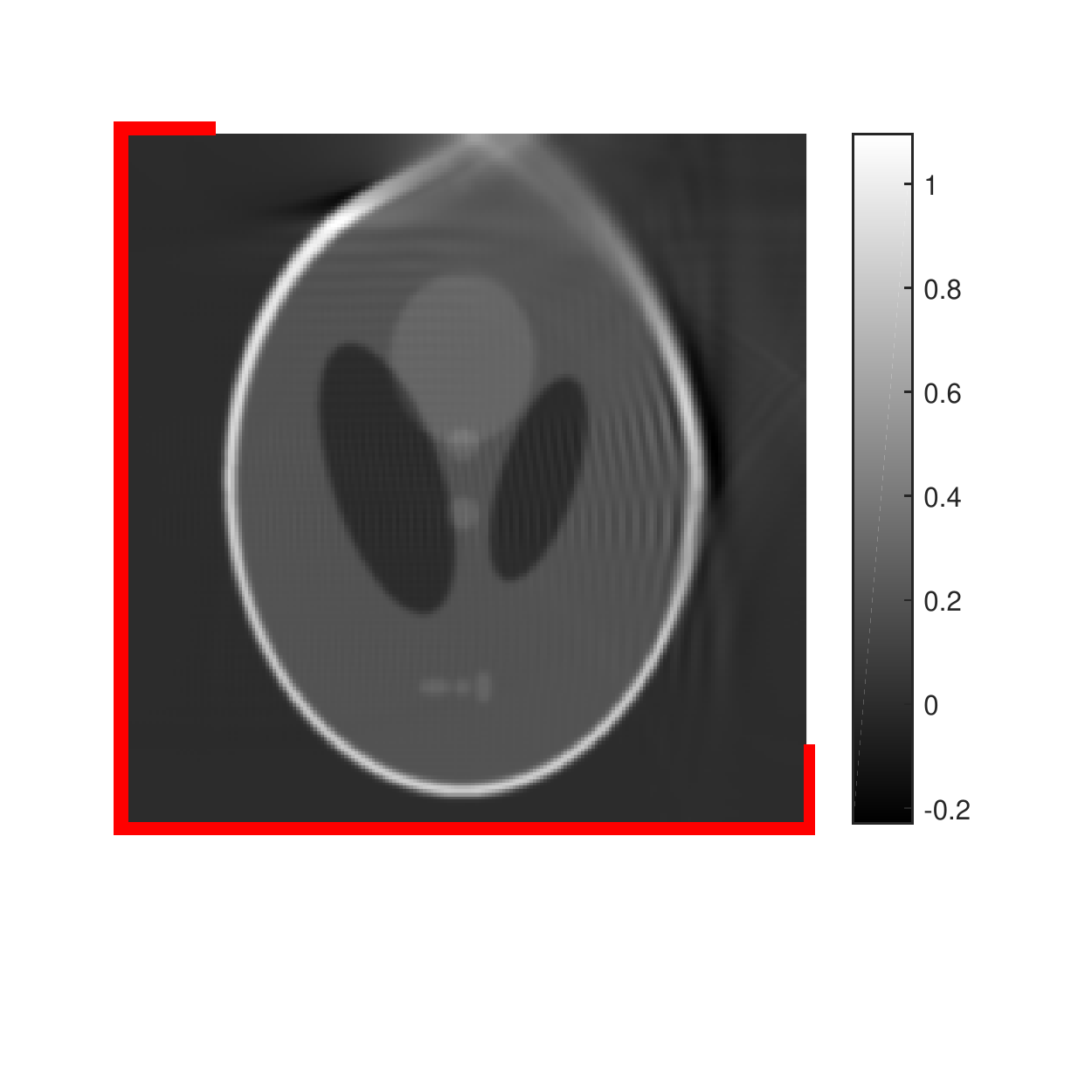}
\caption{\small Speed $1$ with $T=1.8$ with invisible singularities on the top. From left to right: (a) ATR  with 50 steps, a cut near the border, 34\% error. (b) Landweber with a cut near the borders, error $31\%$; (c) Landweber without a cut near the borders, error $36\%$.
 }
\label{3images}
\end{figure}
The reconstructions are shown in Figure~\ref{3images}. The second and the third one are done with the Landweber method but in the second one, we use the fact that we know that the SL phantom is supported in  $\Omega_0$, being a slightly smaller square. Numerically, this means that we work with $\chi L^*L\chi$ instead of $L^*L$, (i.e., $L$ is replaced by $L\chi$, where $\chi$ is the characteristic function of $\Omega_0$. In the third reconstruction, there is no such restriction. The cutoff improves the error a bit. The ``ripples'' artifacts are due to the sharp cutoff of the data at $T=1.8$. If we introduce a gradual cutoff $\chi_1(t)$ with $\chi_1(T)=0$, i.e., if we use 
$\chi L^*\chi_1 L\chi$ instead of $L^*L$, this removes the visible ``ripples'' but blurs the edge of the SL phantom in a larger neighborhood of the invisible singularities on the top. Despite the slightly larger error, the ATR reconstruction has less artifacts.

\begin{figure}[h!] 
  \centering
  \includegraphics[trim = 8mm 32mm 0mm 30mm, clip, scale=0.85]{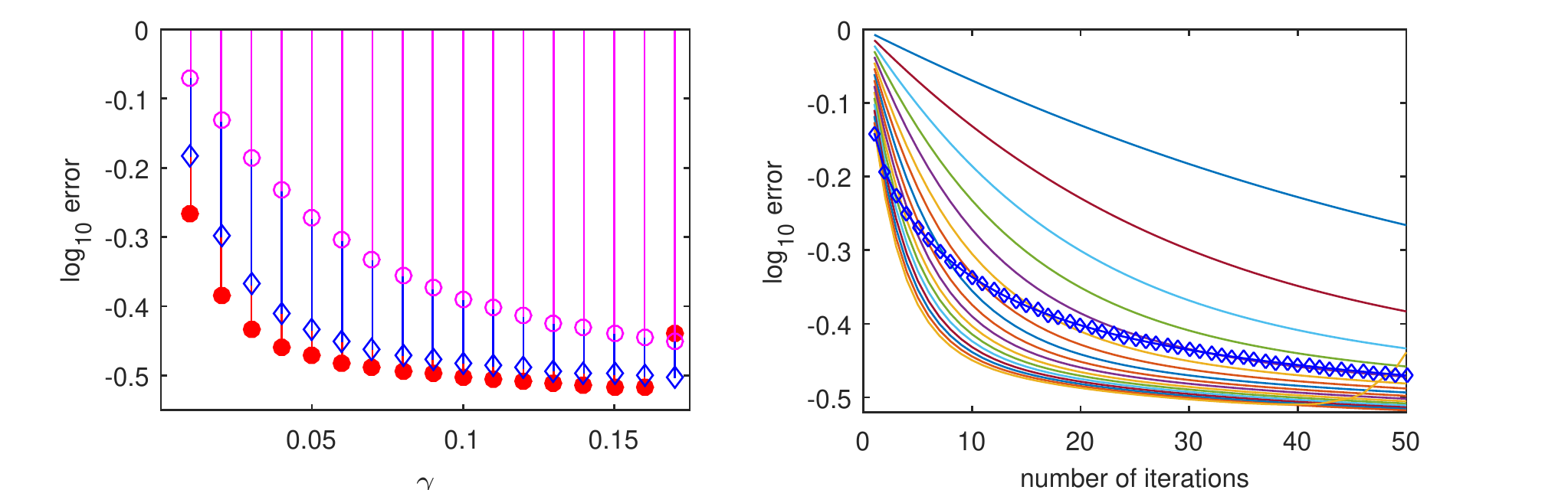}
\caption{\small Error vs.\ $\gamma$ in the unstable case.  Left: errors after $10$ iterations (boxes), after $30$ iterations (diamonds) and after $50$ ones (dots). 
The vertical axis is on a $\log_{10}$ scale and the horizontal axis represents $\gamma$. The lower curve, corresponding to 50 iterations, is flatter than in the stable case. On the right, the curve with the diamond marks is the error with the ATR method.}
\label{errors_unstable}
\end{figure}
The errors for various values of $\gamma$ are shown in Figure~\ref{errors_unstable}. The convergence is slower than in the stable case after $10$ or so steps, and the improvement with $\gamma$ increasing below reaching its optimal value is much slower. The errors are based on a cutoff to $\Omega_0$, which is the better case.  With the cutoff, the Landweber and the ATR errors are closer. On the other hand, the Landweber method is more flexible w.r.t.\ introducing such weights. The Landweber method performs better in this case in terms of the $L^2$ error.

We analyzed the eigenvalues of $L^*L$ next, and the Fourier coefficients of the SL phantom w.r.t.\ its eigenvalues. The results are shown in Figure~\ref{perc0_speed1_T1p8_Lasamatrix_spectral}. This situation is reversed now, compared to the stable case. There are still a lot of  ``zero'' (extremely small) eigenvalues with highly oscillatory eigenfunctions. The non-``zero''  Fourier coefficients however start much earlier than the non-``zero'' eigenvalues. Since our spectral analysis shows that we need to restrict our considerations on the support of the spectral measure only, we can take $\lambda\approx 800$ as a rough lower bound for this support. Then we have practically zero eigenvalues up to, roughly speaking, $\lambda_{1,800}$, where the SL phantom has a non-negligible spectral measure. The norm of the projection of the SL phantom to the space spanned by the first $1,000$ eigenvalues is around $40\%$ of the total one. This means instability and it is in contrast to Figure~\ref{fig_spectral_decomposition}.

 \begin{figure}[h!] 
  \centering
  \includegraphics[trim = 0mm 10mm 0mm 30mm, clip, scale=0.6
  ]{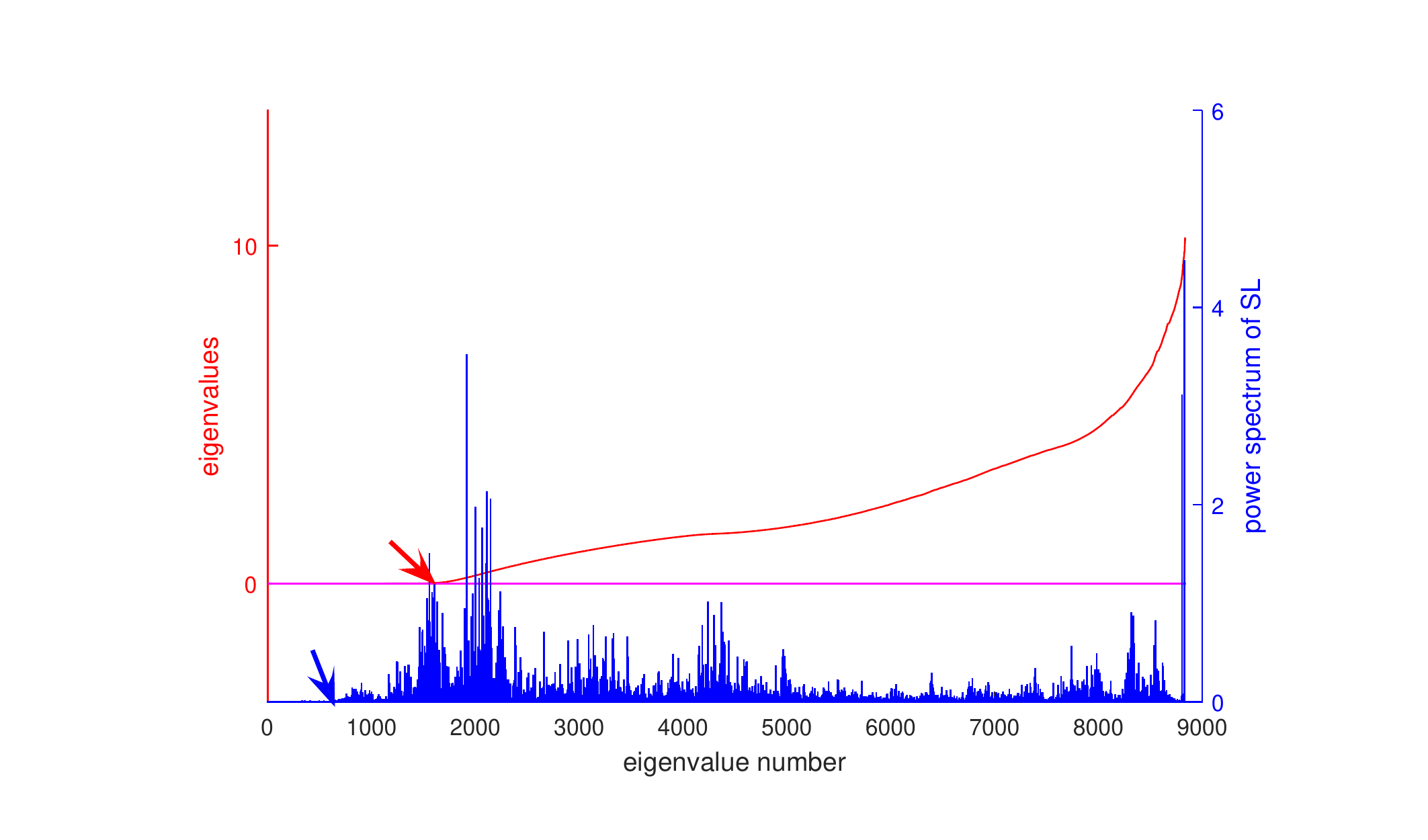}
\caption{\small The unstable case.  
The smooth increasing curve represents the eigenvalues of $\L^*\L$ in the discrete realization on an $101\!\times\!101$ grid restricted to a $94\!\times\!94$ grid. 
$\L^*$ is computed as the adjoint matrix rather than with the wave equation solver. The rough curve represents the squares of the Fourier coefficients of the SL phantom. The arrows indicate the effective lower bound of the power spectrum of the SL phantom, and the first positive eigenvalue modulo small errors, respectively. }
\label{perc0_speed1_T1p8_Lasamatrix_spectral}
\end{figure}

\subsubsection{Unstable example  with data not in the range and with noise} We add data on one side to another one, as in the stable case. The errors did not look much different than the noise case below. Next, we add Gaussian noise  with $0.1$ standard deviation. The data $Lf$ ranges in the interval $[-1,1.5]$. The errors reach a minimum and start diverging slowly for $\gamma\le0.16$, and diverge fast for $\gamma$ larger than that, see Figure~\ref{errors_unstable_noise} on the left. This suggests that $\gamma^*\approx 0.16$, for that particular image, at least, and correlates well with Theorem~\ref{thm_noisy} and its proof which proves divergence with generic perturbations of the data not in the range in the unstable case but indicates a slow divergence. Note that the optimal $\gamma$ looks similar to that in the noise free case, see Figure~\ref{errors_unstable}. On the other hand, instead of a convergent series, we get a slowly divergent one. 

Finally, we add a filter $\chi$ between $L$ and $L_{\rm w}^*$ which cuts high frequencies and also frequencies outside the characteristic cone on each side; in other words, we replace $L_{\rm w}^*$ by $L_{\rm w}^*\chi$. Since $\chi$ is a Fourier multiplier by a non-negative function, $\chi$ is a non-negative operator. The errors get smaller and the iterations do not start to diverge even after $200$ iterations, see Figure~\ref{errors_unstable_noise}. 
\begin{figure}[h!] 
  \centering
  \includegraphics[trim = -5mm 20mm 0mm 30mm, clip, scale=0.6
  ]{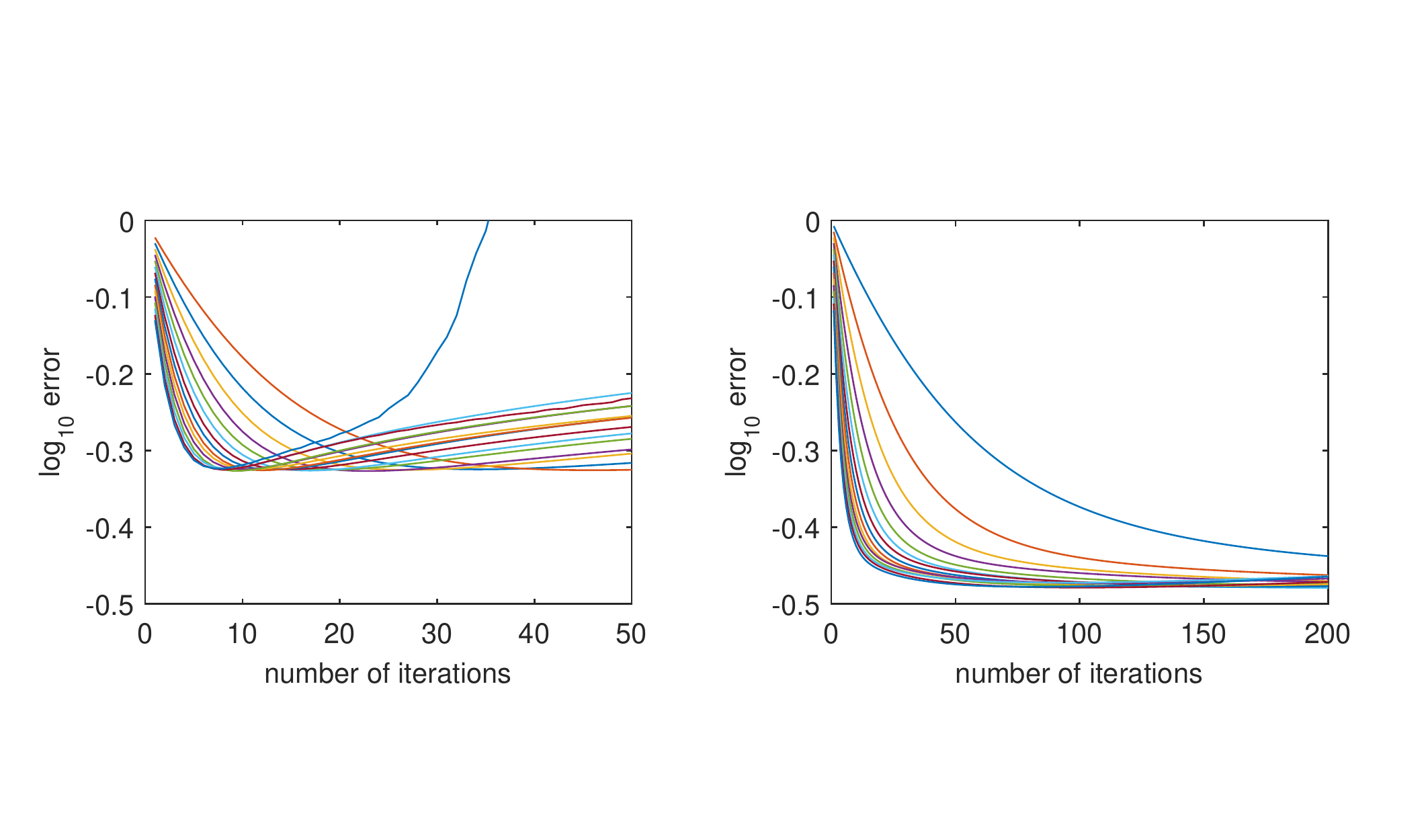}
\caption{\small A unstable case with Gaussian noise.  Left: error curves with $\gamma$ ranging from $0.03$ to $0.17$ (the diverging curve). The critical value of $\gamma$ looks close to that in the zero noise case in Figure~\ref{errors_unstable}. The iterations for $\gamma\le 0.16$   diverge slowly in contrast with the noise free case in Figure~\ref{errors_unstable}. Right: Filtered data, $\gamma\le 0.15$. The errors look like in Figure~\ref{errors_unstable}. }
\label{errors_unstable_noise}
\end{figure}
This is in line with Theorem~\ref{thm_noisy} and its proof since the filtering of the noise in the data reduced significantly the spectral content of the noisy data (w.r.t. $\L^*\L$) in the low part of the spectrum, where the instability is manifested. After that, the problem behaves as that with data not in the range but with small high frequency  content (w.r.t.\ the Fourier transform), see Figure~\ref{errors_unstable}, where the number of iterations is $50$ vs.\ $200$ in Figure~\ref{errors_unstable_noise} on the right. 

The reconstructions are shown in Figure~\ref{unstable_noise_reconstruction}. Even though the filtered reconstruction has a smaller error, it is only marginally better in recovering detail. 

\begin{figure}[h!] 
  \centering
  \includegraphics[trim = 0mm 15mm -10mm 0mm, clip, scale=0.4
  ]{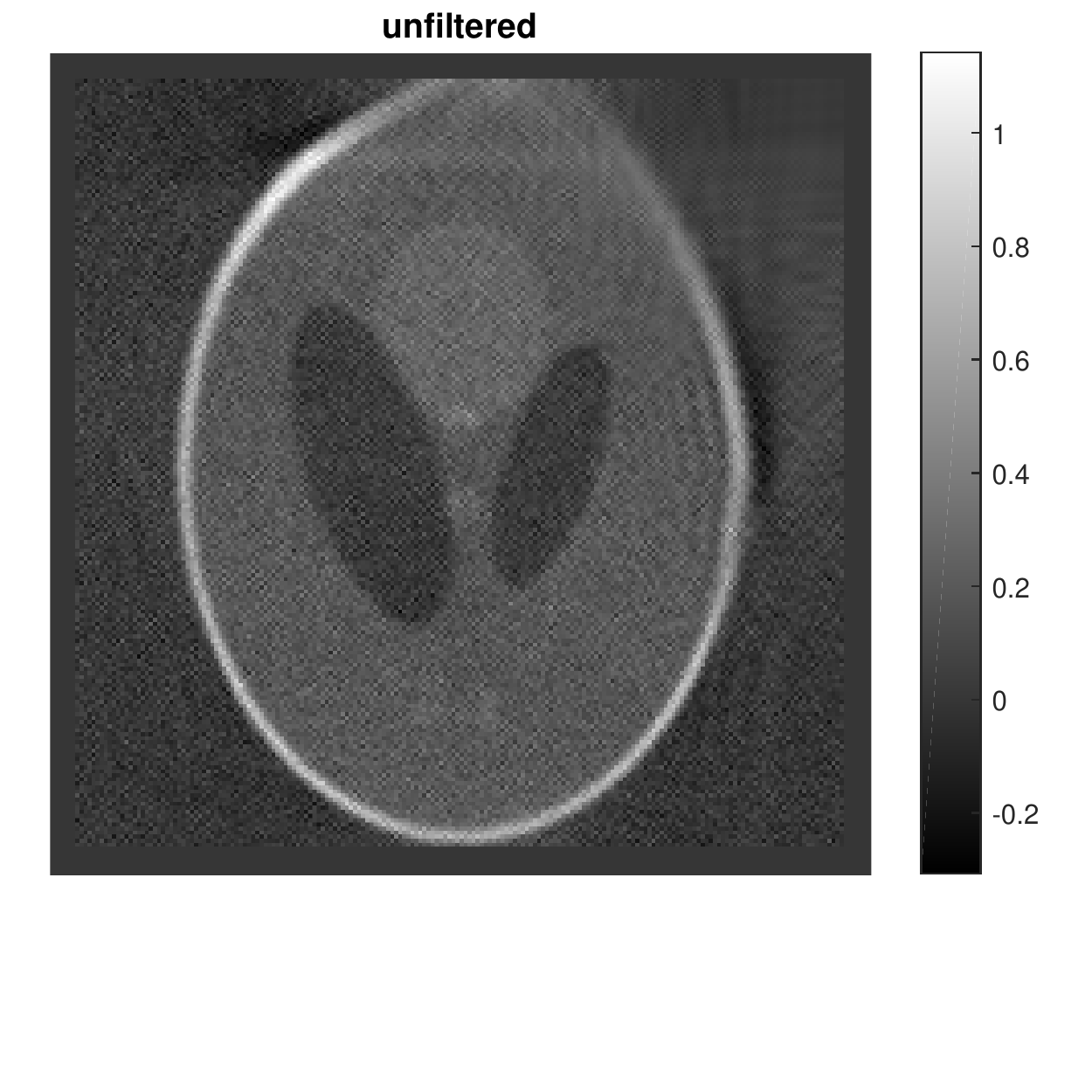}
    \includegraphics[trim =-10mm 15mm 0mm 0mm, clip, scale=0.4
  ]{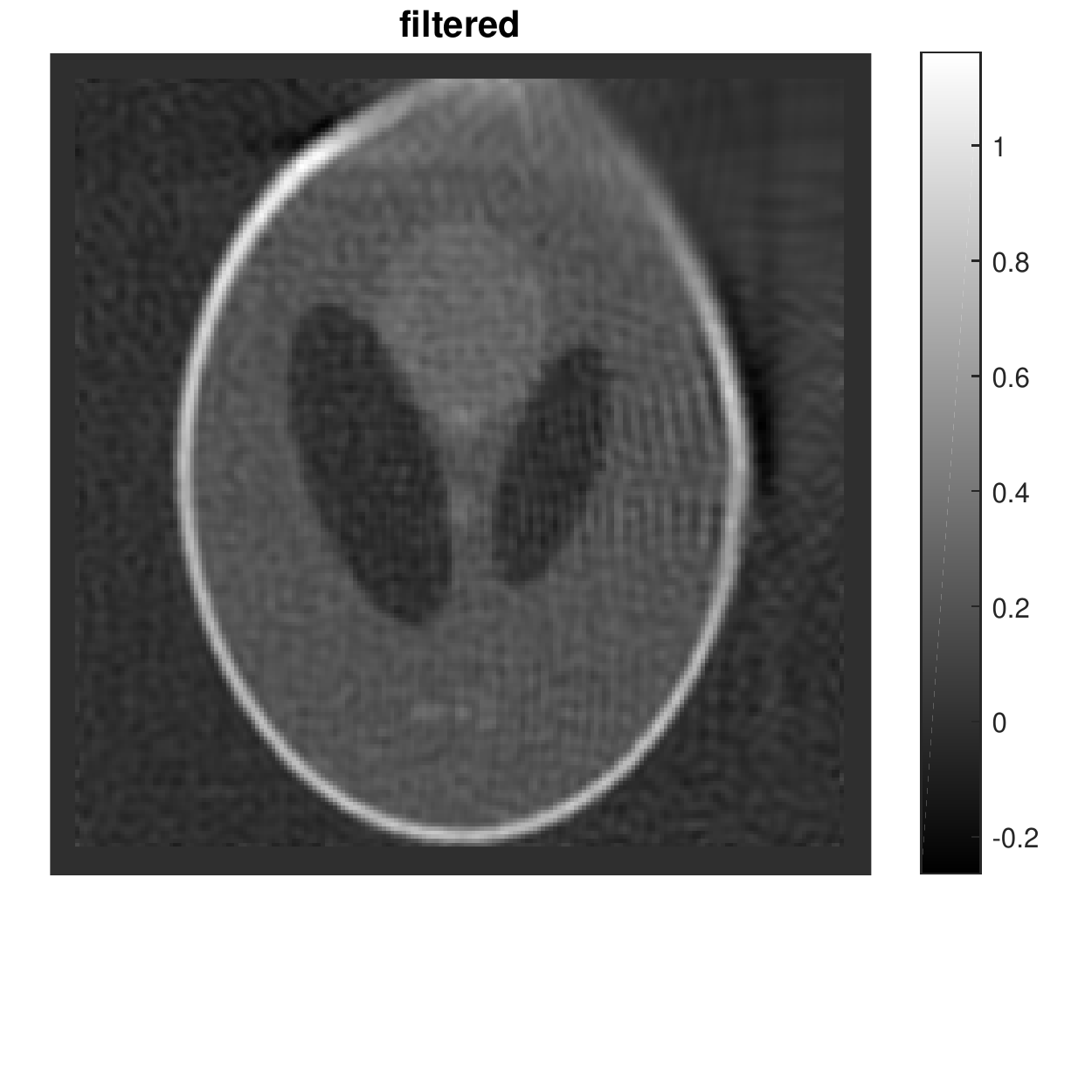}
\caption{\small The unstable case, reconstructions with the Landweber method with noise.  Left: unfiltered data. Right: filtered data. } 
\label{unstable_noise_reconstruction}
\end{figure}

\subsection{Discontinuous sound speed} We choose a sound speed with a jump across a smaller square. Such speeds model thermoacoustic tomography in brain imaging, see, e.g., \cite{YangWang2008,SU-thermo_brain,XuWang2006}.  Singularities in this case reflect from the internal boundary and may refract, as well. Since the speed outside that boundary is faster, this creates rays that do not refract. This makes the problem inside that rectangle potentially unstable. The reconstructions are shown in Figure~\ref{fig_skull}. The ATR method works a bit better not only by providing a slightly better error but there are less visible artifacts. The Landweber reconstruction handles noise better however. 

 \begin{figure}[h!] 
  \centering
        \includegraphics[trim = 10mm 20mm 10mm 20mm, clip, scale=0.5
  ]{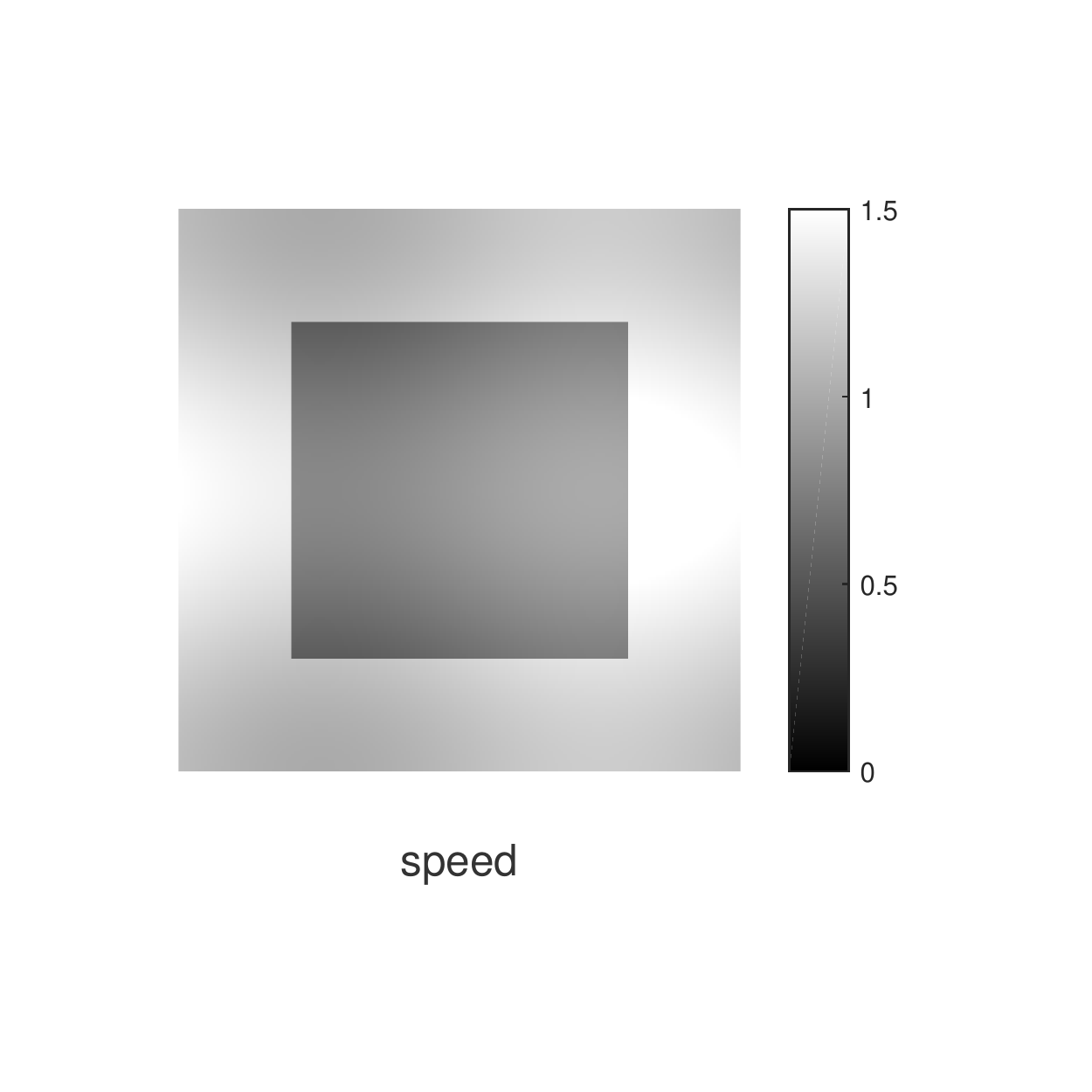}
        \includegraphics[trim = 10mm 20mm 10mm 20mm, clip, scale=0.5
  ]{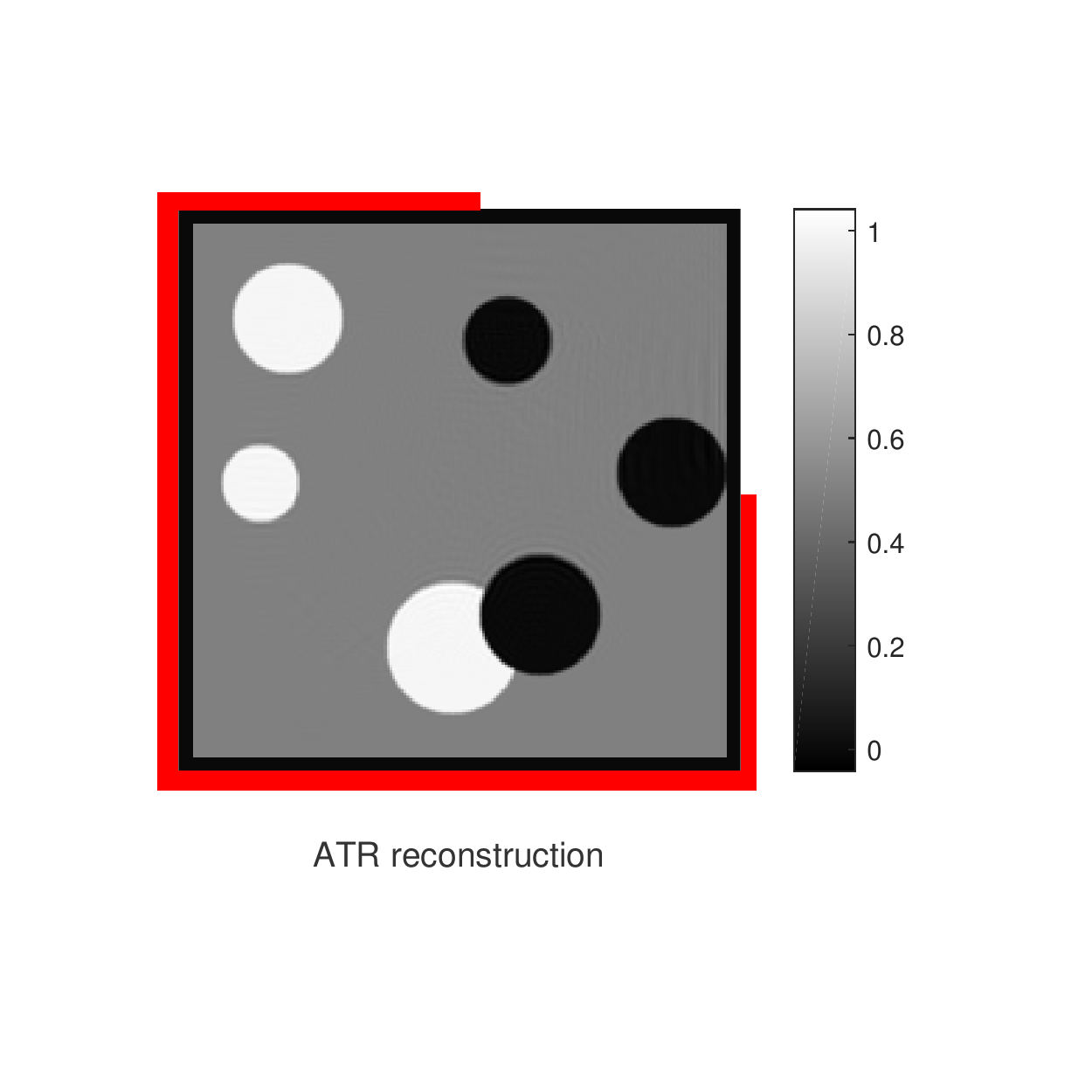}
        \includegraphics[trim = 10mm 20mm 10mm 20mm, clip, scale=0.5
  ]{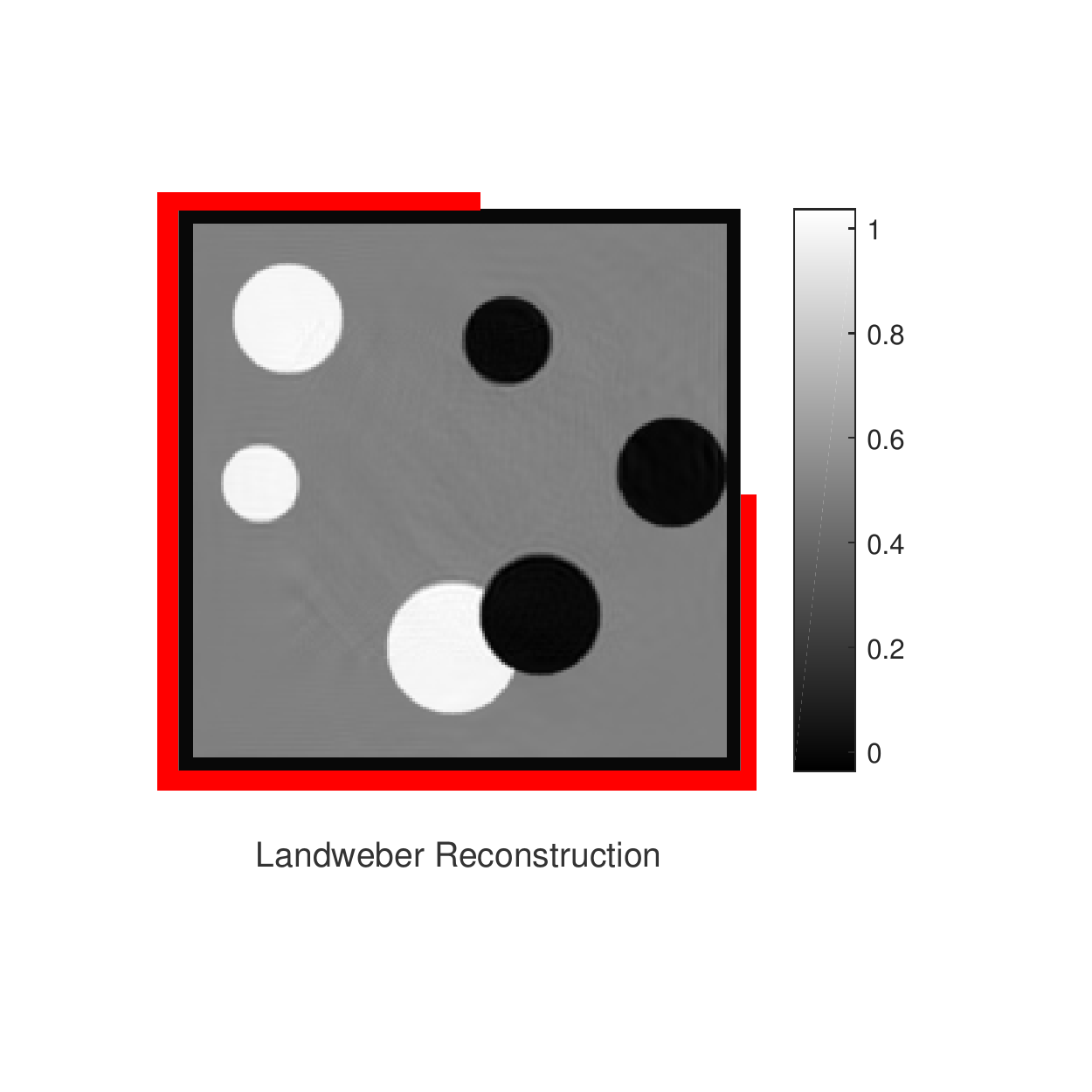}
\caption{\small Reconstructions with a discontinuous speed, plotted on the left. The original is black and white disks on a gray background. 
Data on the marked part of the boundary, $T=4$. Center: ATR reconstruction, error $1\%$. Right: Landweber reconstruction, error $1.5\%$. 
 }
\label{fig_skull}
\end{figure}

\subsection{Forward data generated on a different grid} \label{sec_grid} %
We present now examples where the data was created on a finer grid. As we mentioned above, the finite difference wave solver is inaccurate for large frequencies. This is  known as numerical (grid) dispersion, see, e.g., \cite{Alford1974, Dablain1986,Kelly1976,Fei1995,Albin2012}. High frequency waves propagate slower, which explains the terms dispersion. This effect  gets worse with time. This creates ``ripples'' in the wave fronts of high frequency waves, with oscillations in the back of the front. It was experimentally found in \cite{Alford1974, Dablain1986,Kelly1976} that in order to get a satisfactory performance for a second order finite difference scheme, say at distance 30-40 wavelengths (\cite{Dablain1986}), one needs to sample in the spatial variables each wavelength at a rate 4-5 higher than the Nyquist one, which is two point per wavelength. 
 We refer to \cite{Fei1995,Albin2012} for further discussion and for ways to improve the performance with other solvers. 

Such errors would be canceled in the backprojection, which is a well known phenomenon in numerical inverse problems.  For this reason, it is often suggested that the forward data should be generated by a different solver. This does not solve the dispersion problem however. The backward solver (if based on finite differences) would still be inaccurate for high frequencies, if they are present in the data. The different forwards solver may have the same problem. A natural attempt seems to be to generate data by a known analytic solution or using a much finer grid. This would guarantee the good accuracy of the data but the problem with the backward solver at high frequencies remains. The practical solution (again, if we backproject by finite differences) is to make sure that the data has mostly low frequency content relative to the grid used in the back-projection or to use a higher order scheme which still suffers form dispersion but allows higher frequencies \cite{Dablain1986}. Another solution would be to use other solvers not so much limited at higher frequencies but this is behind the scope of this work.

 A Landweber reconstruction of the SL phantom with data computed on a finer grid with $c=1$ is shown in Figure~\ref{SL_5p7_1p3_T2}. The ripple artifacts are due to high frequency waves in the backprojection propagating slower than the wave speed $1$. 
 Since they become worse when $T$ increases, it is important to keep $T$ low. The critical stability time here is $\sqrt{2}$ and we chose $T=2$. 

\begin{figure}[h!] 
  \centering
  \includegraphics[trim = -5mm 30mm 0mm 25mm, clip, scale=0.6
  ]{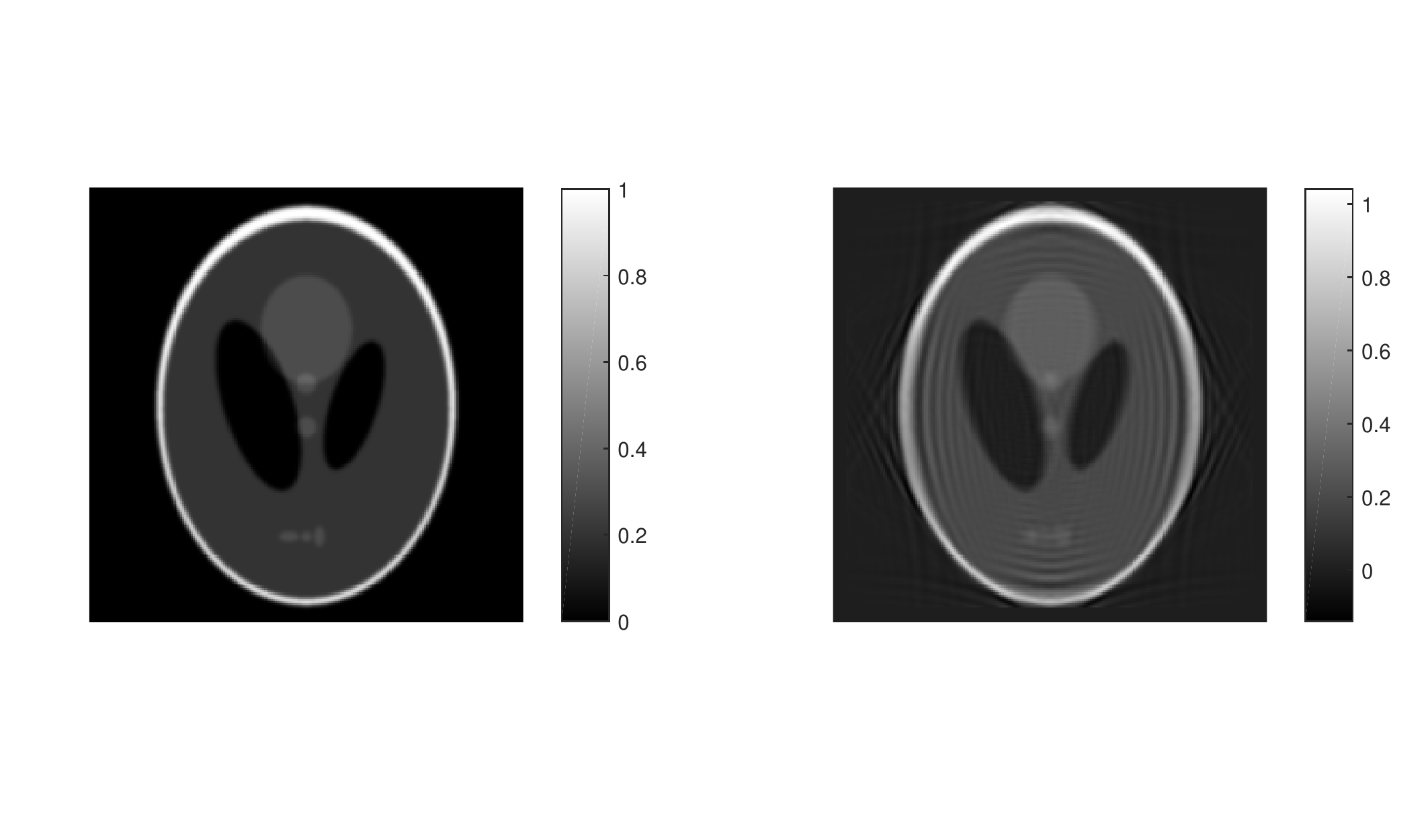}
\caption{\small Left: the original SL phantom. Right: The reconstructed  SL phantom with $c=1$ and $T=2$. The phantom was originally on a $201\times 201$ grid with $\Delta t=\Delta x /\sqrt{2}$.  
The data was computed  on a grid  rescaled by a factor of $5.7$ in the spatial variables and $7.41$ times in the time variable; and then rescaled to the the original one on the boundary before inversion. The ripple artifacts are due to high frequency waves in the time reversal moving slower than the speed $c=1$. }
\label{SL_5p7_1p3_T2}
\end{figure} 
 
  In an idealized real life situation, where we have properly sampled boundary data, textit{if we still want to use a finite difference scheme}, 
  we should do the numerical inversion on a grid much finer than the one determined by Nyquist rate of the signal. That would increase the cost, of course. To simulate such a situation, we should take a continuous phantom with a finite frequency band, oversample it by a factor of five, for example, keep $T$ low, solve the forward problem and sample it on a coarser grid depending on the frequency content of the data. To do the backprojection, we should increase the size of the grid and do finite differences. But making the grid coarser after we have computed $\Lambda f$ and finer right after it basically means that we could stay either with the original finer one in our simulations or change it but keep the data severely oversampled. As mentioned above, a more efficient solution would be to use different solvers for back propagation.

Based on the discussion above, we chose a low frequency phantom in Figure~\ref{Gauss_5p7_1p3_T2} below with data generated by the finer grid used in Figure~\ref{SL_5p7_1p3_T2}. The reconstruction is much more accurate. A reconstruction with the variable speed used above, see section~\ref{sec_5.1.1} yields visibly similar results with a relative $L^2$ error $4\%$. 

\begin{figure}[h!] 
  \centering
  \includegraphics[trim = -5mm 30mm 0mm 25mm, clip, scale=0.6
  ]{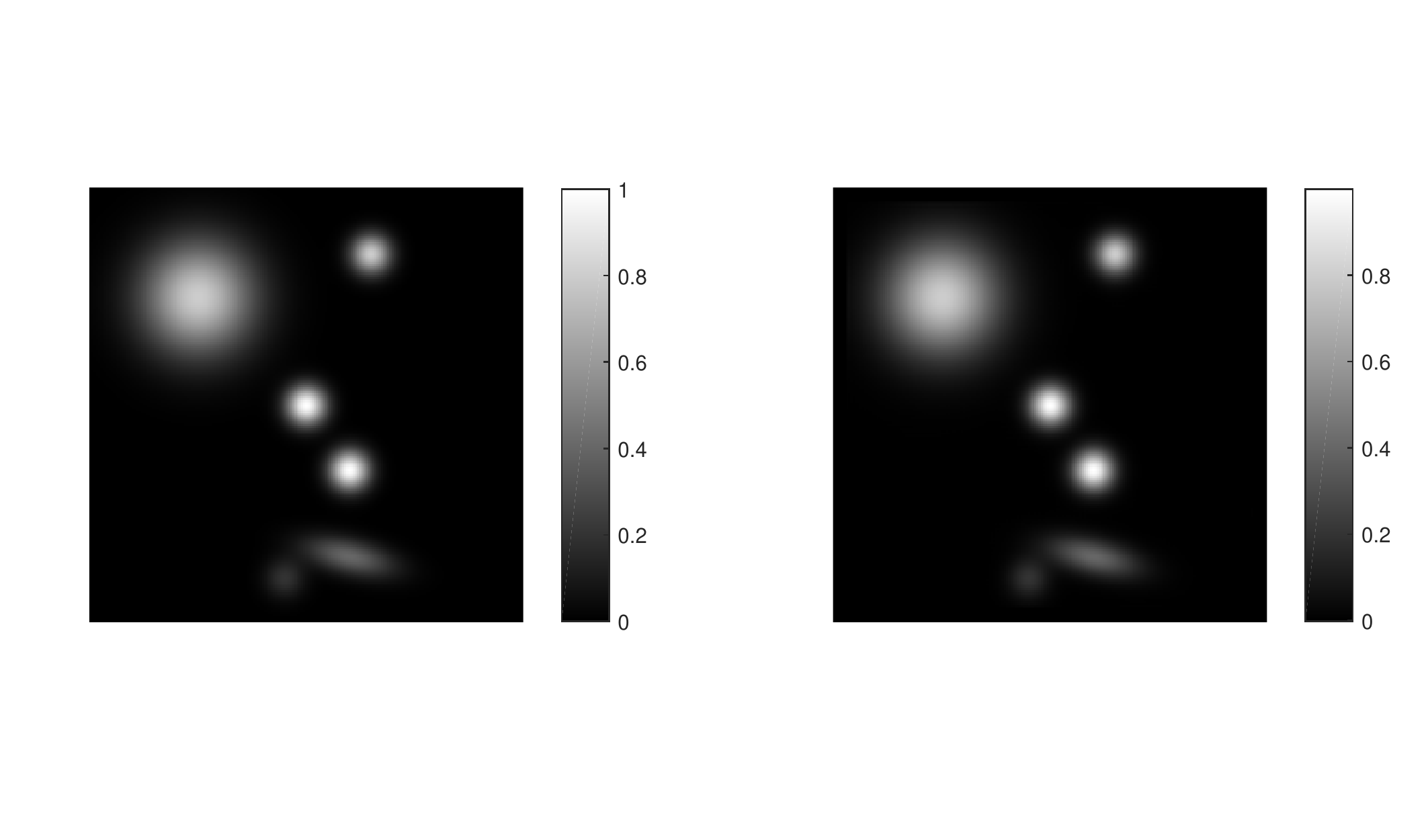}
\caption{\small Same situation as in Figure~\ref{SL_5p7_1p3_T2} but the phantom contains Gaussians with low frequency content only. Left: original. Right: reconstruction with 50 iterations. The relative $L^2$ error is $1.8\%$, the $L^\infty$ error is $3.5\%$.}
\label{Gauss_5p7_1p3_T2}
\end{figure}

\section{Review of the Averaged Time Reversal Method}\label{sec_ATR}

In this section we review briefly the averaged time reversal method proposed in \cite{St-Y-AveragedTR}. The treatment there works for any fixed Riemannian metric $g$ on $\overline{\Omega}$ and the wave equation
$$(\partial^2_t - c^2(x)\Delta_g)u=0 \quad\quad \text{ in } (0,T)\times\Omega$$ 
where $\Delta_g$ is the Laplace-Beltrami operator in the metric $g$ and $c(x)>0$ is a smooth function. In applications $g$ is the Euclidean metric and $\Delta_g$ is the Euclidean Laplacian. Here we sketch the method as well as the resulting algorithm only in the Euclidean case   for simplicity of notions. More general and detailed exposition can be found in \cite{St-Y-AveragedTR}.


\subsection{Complete data}

We start with the complete data case, i.e. when $\Gamma=\partial\Omega$. Here and below $L^2(\Omega)=L^2(\Omega,c^{-2}\d x)$.
We sometimes adopt the notation $u(t)=u(t,\cdot)$. Define the Dirichlet space $H_D(\Omega)$ as the completion of $C^\infty_0(\Omega)$ under the Dirichlet norm
$$\|f\|^2_{H_D(\Omega)}=\int_\Omega |\nabla u|^2 \,\d x.$$
Define the Neumann space $H_N(\Omega)$ to be the quotient space of $H^1(\Omega)$ modulo constant functions. For a subdomain $\Omega_0\subset\Omega$ with smooth boundary, we identify $H_D(\Omega_0)$ with the subspace of $H_D(\Omega)$ consisting of functions supported in $\overline{\Omega}_0$. Introduce the operator $\Pi_0: H_D(\Omega)\rightarrow H_D(\Omega_0)$ by
$\Pi_0 f := h$ where $h$ solves
$$\Delta h = \Delta f \quad \text{ in } \Omega_0, \quad\quad\quad h|_{\partial\Omega_0}=0.$$
$\Pi_0$ is in fact the orthogonal projection of $H_D(\Omega)$ onto $H_D(\Omega_0)$, see \cite[Lemma 2]{St-Y-AveragedTR}.

Let $u$ be the solution of \eqref{BVP} and $\mathcal{L}$ the measurement operator. We construct a (non-averaged) time reversal operator $A$ as follows. Given $h\in H^1([0,T]\times\partial\Omega)$, define $Ah:=v(0)$ where $v$ is the solution of
\begin{equation}   \label{T1}
\left\{
\begin{array}{rcll}
(\partial_t^2 - c^2(x)\Delta)v &=&0 &  \mbox{in $(0,T)\times \Omega$},\\
  v|_{(0,T)\times\bo}&=&h,\\
v|_{t=T} &=& \mathcal{P}h(T),\\ \quad \partial_t v|_{t=T}& =&0.
\end{array}
\right.               
\end{equation} 
Here $\mathcal{P}$ is the Poisson operator such that $\phi:=\mathcal{P}h(T)$ is the solution of
$$\Delta \phi=0 \quad \text{ in }\Omega, \quad\quad\quad \phi|_{\partial\Omega}=h(T).$$
The function $\phi$ is often referred to as the harmonic extension of $h(T)$.

When $h=\mathcal{L}f$ we have $v(0)=A\mathcal{L}f$, which can be viewed as an approximation of the initial data $f$ in the multiwave tomography model in $\mathbb{R}^n$ \cite{SU-thermo}. Indeed, introduce the error operator 
\begin{equation} \label{K}
Kf:=f-A\mathcal{L}f.
\end{equation}
In the multiwave tomography model in $\mathbb{R}^n$, it is shown in \cite{SU-thermo} that if $T$ is such that there is stability, $K$ is a contraction on $H_D(\Omega)$, that is, $\|K\|_{H_D(\Omega)\rightarrow H_D(\Omega)}<1$.  Moreover, $K$ is compact. This makes the operator $I-K$ invertible on $H_D(\Omega)$ and one thus deduces from \eqref{K} that $f=(I-K)^{-1}A\mathcal{L}f$. Inserting the expansion $(I-K)^{-1}=I+K+K^2+\dots$ gives the Neumann series reconstruction algorithm in \cite{SU-thermo}.

In the multiwave tomography model in $\Omega$, the error operator $K:H_D(\Omega)\rightarrow H_N(\Omega)$ is no longer a contraction regardless of how large $T$ is. In fact one could construct high frequency solutions propagating along a single broken geodesic to show that $\|K\|_{H_D(\Omega)\rightarrow H_N(\Omega)}=1$. This breaks down the inversion of $I-K$. The idea suggested in \cite{St-Y-AveragedTR} is to average the time reversal operator $A$ with respect to $T$. The averaging process causes some partial cancellation  of the microlocal singularities with opposite signs at $t=0$ when $T>T_1$, see Definition~\ref{def1}, making the averaged error operator a microlocal contraction (but not compact anymore).

We rewrite the time reversal operator $A$ for the  purpose of averaging later. Set $\tilde{v}=v-\mathcal{P}h(T)$ with $v$ the solution of \eqref{T1}, then $\tilde{v}$ solves
\begin{equation}   \label{T2}
\left\{
\begin{array}{rcll}
(\partial_t^2 -c^2(x)\Delta)\tilde v &=&0 &  \mbox{in $(0,T)\times \Omega$},\\
 \tilde v|_{(0,T)\times\bo}&=&h(t) - h(T)    ,\\
\tilde v(T) =\partial_t\tilde  v(T)& =&0. 
\end{array}
\right.               
\end{equation} 
and $Ah=v(0)=\tilde{v}(0)+\mathcal{P}h(T)$. We consider the terminal time as a parameter now, and call it $\tau$ with $\tau\in [0,T]$. We replace $T$ by $\tau$ in \eqref{T2}. Denote the corresponding solution by $\tilde{v}^\tau$ and the corresponding time reversal operator by $A(\tau)$. Note that 
\begin{equation} \label{Atau}
A(\tau)h=\tilde{v}^\tau(0) + \mathcal{P}h(\tau)
\end{equation}
and note that $\tilde{v}^\tau$ solves a similar problem as \eqref{T2} but the boundary data is replaced by $H(\tau-t)(h(t)-h(\tau))$ with $H$ the Heaviside function.

Choose a weight function $\chi\in C^\infty_0(\mathbb{R})$ which is positive and has integral equal to one over $[0,T]$. We define the averaged time reversal operator $\mathcal{A}$ by multiplying \eqref{Atau} by $\chi(\tau)$ and integrating it over $[0,T]$:
$$\mathcal{A}h:=\int^T_0 A(\tau)h\chi(\tau) \,\d\tau = \int^T_0 \tilde{v}^\tau(0)\chi(\tau)\,\d\tau + \mathcal{P}\int^T_0 h(\tau)\chi(\tau)\,\d\tau.$$ 
Finally, define $\mathcal{A}_0:=\Pi_0\mathcal{A}$ where $\Pi_0$ projects the result onto $H_D(\Omega_0)$. The projection of the second term actually vanishes since it is harmonic. The following theorem gives an explicit reconstruction of $f$ when $h=\mathcal{L}f$.

\begin{theorem}  \label{thm_average} Let $(\overline{\Omega}, c^{-2}\d x^2)$ be  non-trapping with a strictly convex boundary as a Riemannian manifold and let 
$\Omega_0\Subset\Omega$. Suppose $T>T_1(\bo,\Omega)$ and denote $\mathcal{K}_0 := \Id - \mathcal{A}_0\mathcal{L}$ on $H_D(\Omega_0)$.
Then $\|\mathcal{K}_0\|_{H_D(\Omega_0)\rightarrow H_D(\Omega_0)}<1$. 
In particular, $\Id-\mathcal{K}_0$ is invertible on $H_{D}(\Omega_0)$, and the inverse  problem has an explicit solution of the form
\be{2.2}
f = \sum_{m=0}^\infty \mathcal{K}_0^m \mathcal{A}_0h, \quad h:= \mathcal{L} f.
\ee
\end{theorem}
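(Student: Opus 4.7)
The plan is to establish the contraction estimate $\|\mathcal{K}_0\|_{H_D(\Omega_0)\to H_D(\Omega_0)} < 1$; once this is in hand, $\mathrm{Id}-\mathcal{K}_0$ is invertible with inverse given by the Neumann series $\sum_{m=0}^\infty \mathcal{K}_0^m$ converging in operator norm, and applying it to $\mathcal{A}_0 h = \mathcal{A}_0 \mathcal{L} f = (\mathrm{Id}-\mathcal{K}_0) f$ yields the reconstruction formula \eqref{2.2}.

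For the contraction, I would first analyze the non-averaged error $K(\tau) := \mathrm{Id} - A(\tau)\mathcal{L}$ by an energy identity: comparing the forward solution $u$ of \eqref{BVP} with the backward solution of \eqref{T1} with terminal time $\tau$ in place of $T$, and subtracting the harmonic extension $\mathcal{P}h(\tau)$ of the terminal data as in \eqref{T2}, one sees that $K(\tau)f$ measures the non-harmonic part of $u(\tau)$ that the backward pass cannot recover. Microlocally, the obstruction to contractivity for a single $\tau$ is carried by wavefronts concentrated along broken bicharacteristics that, relative to $\tau$, have just undergone a reflection from $\bo$; time-reversal starting at $\tau$ mis-assigns the reflected branch and gives $\|K(\tau)\|_{H_D \to H_N} = 1$, which prevents a Neumann series in $K(\tau)$ alone.

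The averaging against the positive weight $\chi(\tau)$ is what breaks this obstruction. The corner singularity at $(t,x)=(\tau,y)$, $y\in\bo$, implicit in the boundary data $H(\tau-t)(h(t)-h(\tau))$ of \eqref{T2}, produces a Fourier integral contribution whose canonical relation depends smoothly on $\tau$; integrating $A(\tau)$ against the smooth positive weight $\chi$ regularizes this corner, turning those FIO contributions into smoothing operators. The hypothesis $T>T_1$ combined with strict convexity of $\bo$ ensures that every broken geodesic through $\overline{\Omega}_0$ meets $\bo$ transversely at some time in $(0,T)$, and a principal symbol calculation (in the spirit of \cite{St-Y-AveragedTR} and \cite{SU-thermo}) shows that the resulting $\mathcal{K}_0$ is microlocally of order zero with symbol strictly less than $1$ in modulus uniformly on $S^*\Omega_0$. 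Projection onto $H_D(\Omega_0)$ via $\Pi_0$ preserves this bound since $\Pi_0$ is an orthogonal projection.

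To upgrade the microlocal estimate to an operator norm bound, I would argue by contradiction: if $\|\mathcal{K}_0 f_n\|_{H_D(\Omega_0)}/\|f_n\|_{H_D(\Omega_0)}\to 1$ along some normalized sequence, extract a weakly convergent subsequence; the strict symbol bound forces the high-frequency component of $f_n$ to contribute strictly less than $\|f_n\|$ asymptotically, so the weak limit $f_\infty$ must saturate the inequality on low frequencies, which by the energy identity forces $\mathcal{L}f_\infty = 0$. Unique continuation \cite{tataru95} then gives $f_\infty = 0$, contradicting the lower bound. The hard step will be the principal symbol computation of $\mathcal{A}\mathcal{L}$, namely verifying that the $\chi$-averaging cancels precisely the broken-ray contributions that obstructed contractivity of the unaveraged $A$; this is the microlocal mechanism distinguishing the reflector problem of \cite{St-Y-AveragedTR} from the free-space setting of \cite{SU-thermo}, and everything else in the proof is essentially bookkeeping around it.
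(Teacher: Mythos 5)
Before comparing, note that this paper does not actually prove Theorem~\ref{thm_average}: Section~\ref{sec_ATR} is a review, and the result is imported from \cite{St-Y-AveragedTR}; the only guidance given here is the description of the mechanism, namely that averaging in $\tau$ produces a \emph{partial cancellation} of the singularities returned by the time reversal, making the averaged error operator ``a microlocal contraction (but not compact anymore)''. Measured against that, your sketch has the right architecture (contraction $\Rightarrow$ Neumann series \eqref{2.2}; the unaveraged $K(\tau)$ has norm $1$ on $H_D\to H_N$; the condition $T>T_1$ must enter through the broken geodesics; a final step to pass from a microlocal statement to an operator-norm bound), but the central mechanism you propose is not correct. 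You claim that integrating against $\chi(\tau)$ ``regularizes the corner'' in the boundary data of \eqref{T2} and thereby ``turns those FIO contributions into smoothing operators'', so that $\mathcal{K}_0$ becomes a microlocal operator of order zero ``with symbol strictly less than $1$''. In the cavity the singularities never leave $\Omega$, so the broken-bicharacteristic (FIO) part of the error survives the averaging; that is precisely why $\mathcal{K}_0$ is \emph{not} compact, in contrast with the free-space operator of \cite{SU-thermo}, and why it is not a pseudodifferential operator with a scalar symbol to estimate. What the averaging actually buys is a reduction of the \emph{amplitudes} of these FIO contributions: for each $(x,\xi)$ over $\bar\Omega_0$, once $\tau$ exceeds the first contact time of the corresponding broken geodesic with the boundary, the reversal with terminal time $\tau$ captures that singularity, so the $\chi$-average of the error amplitudes is uniformly below $1$ when $T>T_1$. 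That quantitative amplitude estimate is the entire content of $\|\mathcal{K}_0\|<1$, and it is exactly the step you defer (``the hard step will be the principal symbol computation''); moreover the mechanism you offer for it would, if true, prove compactness of the error, which is false here. So the proposal assumes the theorem's core rather than proving it.

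The final upgrade step is also weaker than you present it. A microlocal (essential-norm) bound strictly below $1$ does not by itself give $\|\mathcal{K}_0\|<1$: one must additionally rule out eigenvalues of $\mathcal{K}_0^*\mathcal{K}_0$ at or above $1$, e.g.\ by noting that spectrum above the essential spectrum consists of eigenvalues whose eigenfunctions would have to satisfy $\|\mathcal{K}_0 f\|\ge\|f\|$, and excluding this by the strict inequality $\|\mathcal{K}_0 f\|<\|f\|$ for $f\neq0$, which rests on injectivity of $\mathcal{L}$ via unique continuation \cite{tataru95} together with an energy argument. Your contradiction argument gestures at this, but the pivotal assertions --- that ``the strict symbol bound forces the high-frequency component to contribute strictly less'' and that the weak limit ``forces $\mathcal{L}f_\infty=0$'' --- are stated without justification, and the first of them again presupposes the unproved amplitude estimate. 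Once $\|\mathcal{K}_0\|<1$ is granted, your derivation of \eqref{2.2} from the Neumann series is of course correct, but that part is routine.
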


This theorem leads to the following iterative reconstruction algorithm:
\be{NSI}
\begin{split}
f_1 &=  \mathcal{A}_0 h, \quad h:= \mathcal{L} f,\\
f_{n}& = (\Id-\mathcal{A}_0\mathcal{L}) f_{n-1} +\mathcal{A}_0 h, \quad n=2,3,\dots. 
\end{split}
\ee

\subsection{Partial data}

In the partial data case where $\Gamma\subsetneq\partial\Omega$, the (non-averaged) time reversal operator $A$ needs the following modification. Given $h\in H^1([0,T]\times\partial\Omega)$ with $\supp h \subset (0,T)\times\Gamma$, we define $Ah:=v(0)$ where $v$ is the solution of the following problem with mixed boundary conditions
\[
\left\{
\begin{array}{rcll}
(\partial_t^2 - c^2(x)\Delta)v &=&0 &  \mbox{in $(0,T)\times \Omega$},\\
  v|_{(0,T)\times\Gamma}&=&h,\\
   \partial_\nu  v|_{(0,T)\times(\bo\setminus\Gamma)}&=&0,\\
v|_{t=T} &=&\phi,\\ \quad \partial_t v|_{t=T}& =&0, 
\end{array}
\right.               
\]
Here $\nu$ is the unit outer normal vector field of $\partial\Omega$; $\phi$ is the solution of the Zaremba problem
$$\Delta \phi=0 \quad \text{ in } \Omega, \quad\quad\quad \phi|_{\Gamma}=h(T), \quad \partial_\nu v|_{\partial\Omega\backslash\Gamma}=0.$$
In other words, the modification to the time reversal \eqref{T1} is that we impose the homogeneous Neumann boundary condition (which is satisfied by the forward solution $u$) on $\partial\Omega\backslash\Gamma$ where the measurement is unavailable. The averaged time reversal operator is then constructed in a similar manner. We were only able to prove however that the averaging method with partial data however provides a parametrix recovering the singularities of $f$ which do not hit the edge of $[0,T]\times\Gamma$ \cite{St-Y-AveragedTR}. This does not cause a significant loss of singularities as those that hit the edge of $\Gamma$ form  a set of measure zero in the cotangent bundle. Then we have $\|K\|\le1$ but we do not know if $\|K\|<1$. This does not guarantee convergence of the Neumann series but we use such series numerically nevertheless. 

%

\end{document}